\title{Existence and properties of solutions of extended play-type hysteresis model}
\author[1]{K. Mitra\footnote{email: \href{mailto:koondanibha.mitra@inria.fr}{koondanibha.mitra@inria.fr} }}
\affil[1]{INRIA Paris, 2 Rue Simone IFF, 75012 Paris, France}
\numberwithin{equation}{section}
\DeclareMathSizes{\@xpt}{\@xpt}{6}{5}
\newcounter{Lmm}
\newcounter{Prop}
\newcounter{rmrk}
\newcounter{def}
\newcounter{assumption}
 \newcounter{EUassumption}
  \newcounter{EUXproperties}
\newtheorem{theorem}{Theorem}[section]
\newtheorem{lemma}{Lemma}[section]
\newtheorem{proposition}[Prop]{Proposition}
\newtheorem{corollary}[Lmm]{Corollary}
\newtheorem{remark}[rmrk]{Remark}
\newtheorem{definition}[def]{Definition}
\def \a  {\alpha}
\def \d  {\delta}
\def \D  {\Delta}
\def \vr  {\varrho}
\def \Om {\Omega}
\def \t  {\tau}
\def \del {\nabla}
\def \Df {\mathcal{D}}
\def \h  {{\sc H}}
\def \vfl  {{\bm{F}}}
\def \p  {\partial}
\def \N  {{\mathbb{N}}}
\def \R  {{\mathbb{R}}}
\def \Pim  {{p^{(i)}_c}}
\def \Pdr  {{p^{(d)}_c}}
\def \rim  {{\rho^{(i)}}}
\def \rdr  {{\rho^{(d)}}}
\def \sgn {{\rm sign}}
\def \W {{\bf {\cal W}}}
\def \vg {\bm{g}}
\newcommand{\norm}[1]{ \bigl\| #1 \bigr\|}
\let\OLDthebibliography\thebibliography
\renewcommand\thebibliography[1]{
  \OLDthebibliography{#1}
  \setlength{\parskip}{0pt}
  \setlength{\itemsep}{0pt plus 0.3ex}
}
\begin{document}
\maketitle

\begin{abstract}
This paper analyses the well-posedness and properties of the extended play-type model which was proposed in \cite{Kmitra2017} to  incorporate hysteresis in unsaturated flow through porous media. The model, when regularised, reduces to a nonlinear degenerate parabolic equation coupled with an ordinary differential equation. This has an interesting mathematical structure which, to our knowledge, still remains unexplored. The existence of solutions for the non-degenerate version of the model is shown using the Rothe's method. An equivalent to maximum principle is proven for the solutions. Existence of solutions for the degenerate case is then shown assuming that the initial condition is bounded away from the degenerate points. Finally, it is shown that if the solution for the unregularised case exists, then it is contained within physically consistent bounds. 
\end{abstract}
\section{Introduction}
The Richards equation is commonly used to model the flow of water and air through soil \cite{Bear1979,helmig1997multiphase}. It is obtained by combining the mass balance equation with the Darcy's law  \cite{Bear1979,helmig1997multiphase}. In dimensionless form it reads,
\begin{subequations}\label{Eq:ExPlayModel}
\begin{align}
\p_t S +\del\cdot[k(S)(\del p - \vg)]=0,\label{Eq:Richard}
\end{align}
where the water saturation $S$ and capillary pressure $p$ are the primary unknowns. The saturation $S$ is assumed to be bounded in $[0,1]$, $k:\R\to \R^+$ represents the relative permeability function and $\vg$ represents the gravitational acceleration assumed to be constant in our case. To close \eqref{Eq:Richard}, a relation between $S$ and $p$ is generally assumed. In this paper, we use the extended play-type hysteresis model (henceforth called the `EPH' model) \cite{Kmitra2017} for this purpose which equates $S$ and $p$ as
\begin{align}
p \in \tfrac{1}{2}(\Pdr(S)+\Pim(S))- \tfrac{1}{2}(\Pdr(S)-\Pim(S))\;\mathrm{sign}(\p_t \h(S) + \p_t p).\label{Eq:model}
\end{align}
\end{subequations}
 The functions $\Pim,\,\Pdr,\,\h:(0,1]\to \R$ are determined based on experiments \cite{Bear1979,helmig1997multiphase,van1980closed,brooks1966properties} and $\mathrm{sign}(\cdot)$ is the multivalued signum graph given by
\begin{align}
    \mathrm{sign}(\zeta)=
    \begin{cases}
    1 &\text{ for } \zeta>0,\\
    [-1,1] &\text{ for } \zeta=0,\\
    -1 &\text{ for } \zeta<0.
    \end{cases}\label{EUXSign}
\end{align} 
A detailed explanation of the EPH model \eqref{Eq:model} is given later. On top of being nonlinear, the model  \eqref{Eq:ExPlayModel} is also degenerate in the sense that $k(S)$ can vanish and $\Pim(S),\,\Pdr(S)$ explode as $S\to 0$.

To analyse the model \eqref{Eq:ExPlayModel}, we will study another class of problems, i.e.
\begin{subequations}\label{Eq:ParaXode}
\begin{align}
&\p_t u +\del\cdot \vfl(u,v)=\del\cdot[\Df(u,v)\del u]+ \Psi_1(u,v),\label{Eq:UVformulationRichard}\\
&\p_t v=\Psi_2(u,v),\label{Eq:UVformulationOde}
\end{align}
\end{subequations}
completed with suitable boundary and initial conditions. The equivalence of \eqref{Eq:ParaXode} with the regularised version of \eqref{Eq:ExPlayModel}  will be shown in \Cref{Sec:MathForm}. System \eqref{Eq:ParaXode} has an interesting mathematical structure as it consists of a nonlinear parabolic partial differential equation in a two-way coupling with an ordinary differential equation. This implies that standard techniques, such as the $L^1$ contraction \cite{otto1996l1} principle or the Kirchhoff transform \cite{alt1984nonstationary} can not directly be applied to this system. Moreover, the maximum principle can be violated for the system \eqref{Eq:ParaXode} which gives rise to the overshoot phenomenon discussed in \cite{VANDUIJN2018232,mitra2018wetting}.

\begin{figure}[h!]
\begin{subfigure}{.5\textwidth}
\includegraphics[scale=.35]{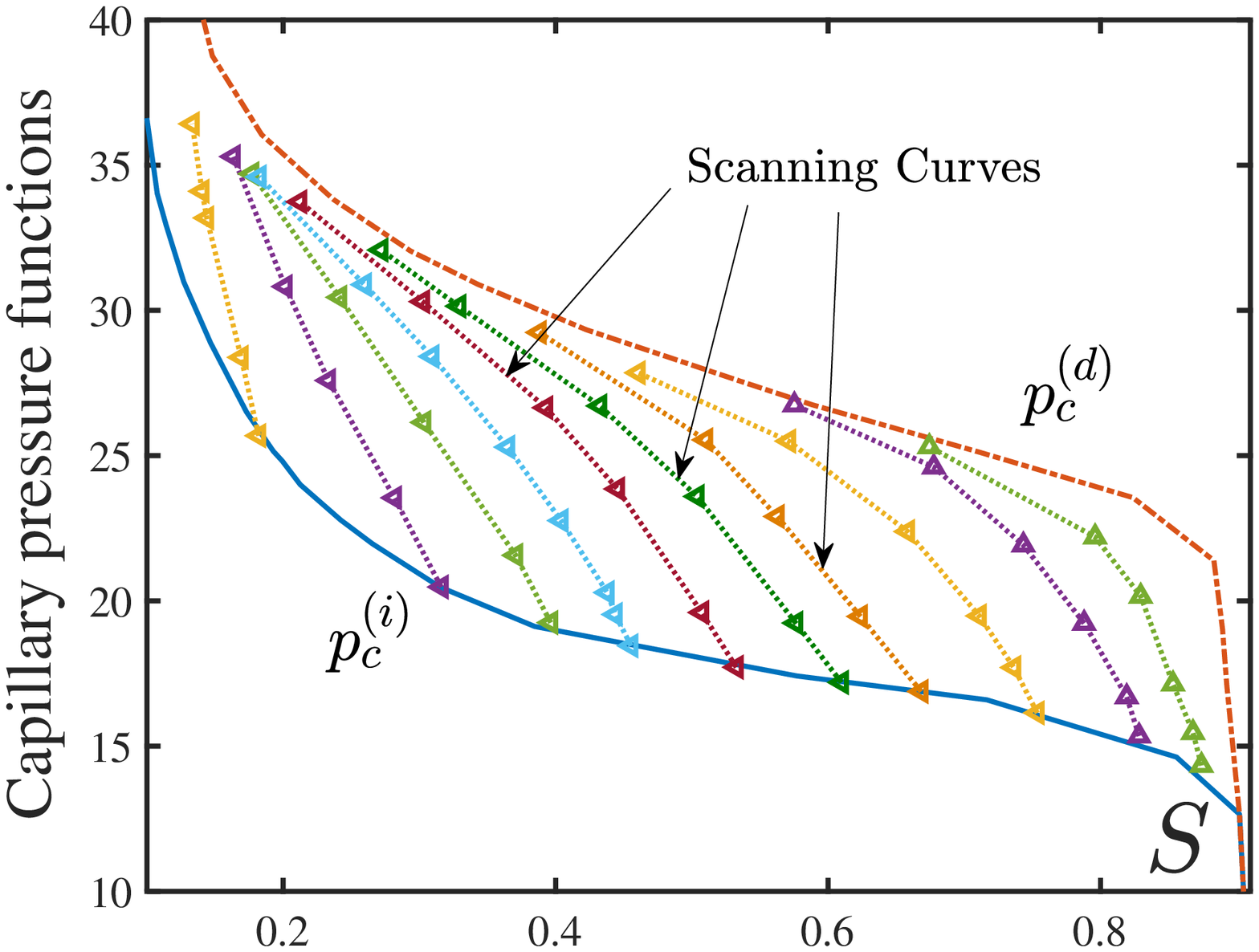}
\end{subfigure}
\begin{subfigure}{.5\textwidth}
\includegraphics[scale=.35]{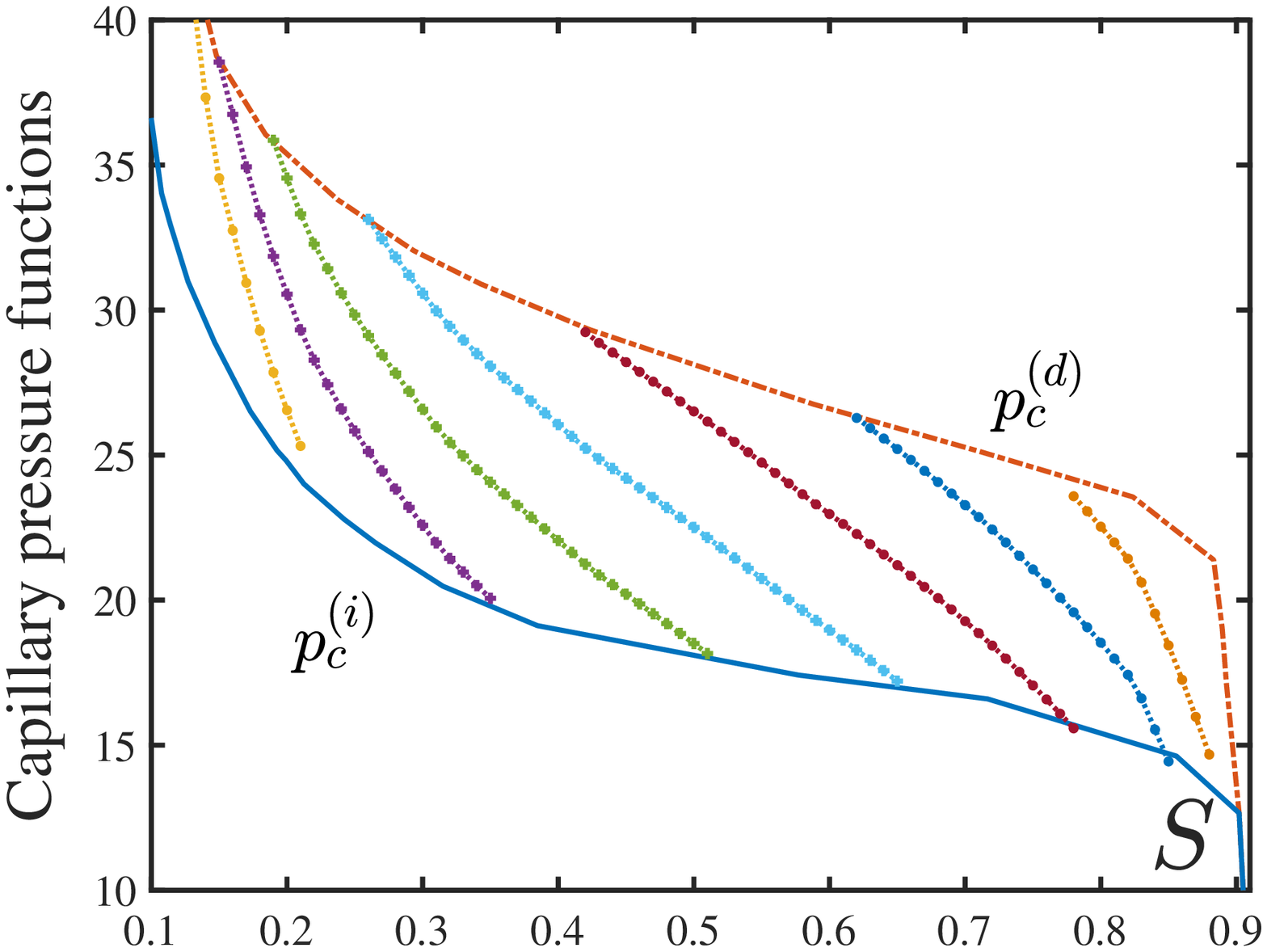}
\end{subfigure}
\caption{(left) Plots of $\Pim(S)$, $\Pdr(S)$ and scanning curves from imbibition experiments in \cite{Morrow_Harris}.  (right) Plot from \cite{Kmitra2017} showing the scanning curves  for the extended play-type hysteresis model \eqref{Eq:model} fitted from the left figure by tuning $\h(S)$. }
\label{fig:ThePcCurves}
\end{figure}

Further motivation for our analysis comes from establishing a physically consistent and well-posed model for hysteresis. Hysteresis refers to the phenomenon that if $p$ is measured in an imbibition experiment where $\p_t S>0$ at all times then it follows a imbibition pressure curve denoted by $\Pim(S)$. On the other hand, if $\p_t S<0$ at all times, then a different curve, i.e. drainage curve or $\Pdr(S)$, is followed. Curves intermediate to $\Pim$ and $\Pdr$ (where $\Pim(S)<p<\Pdr(S)$) are followed if the sign of $\p_t S$ changes from positive to negative or vice versa. These are referred to as the scanning curves, see \Cref{fig:ThePcCurves}.
This phenonmena was first documented in 1930 by Haines \cite{haines1930studies} and since then has been verfied by numerous experiments, \cite{Morrow_Harris,poulovassilis1970hysteresis,zhuang2017analysis}
being some notable examples. Several different classes of models are available that include hysteresis, Lenhard-Parker model \cite{parker1987parametric},  interfacial area models \cite{HASSANIZADEH1990169,niessner2008model},
and TCAT (thermodynamically constrained averaging theory) models \cite{miller2019nonhysteretic,miller2019nonhysteretic}  are examples of such. Review of hysteresis from a mathematical, modelling
and physical perspective can be found in \cite{schweizer2017hysteresis,Kmitra2017,miller2019nonhysteretic,MyThesis}. 

However, due to the complex nature of these models, effects of hysteresis are often ignored in many practical applications and $p$ is simply approximated by some weigthed average of $\Pim(S)$ and $\Pdr(S)$ \cite{Bear1979,helmig1997multiphase}, e.g.,
\begin{equation}
p=\dfrac{1}{2}(\Pdr(S) + \Pim(S)).\label{Eq:StandRelationship}
\end{equation}
Equation \eqref{Eq:Richard} along with \eqref{Eq:StandRelationship} constitutes a nonlinear diffusion problem with diffusivity $-\frac{1}{2}k(S)(\Pim'(S)+\Pdr'(S))$. The existence of weak solutions of such problems is studied in \cite{alt1983quasilinear,alt1984nonstationary} and uniqueness is shown in \cite{otto1996l1,carrillo1999uniqueness} using $L^1$ contraction principle. 
However, this simplification introduces significant errors, major examples being in trapping \cite{joekar2013trapping}, gravity fingering \cite{ratz2014hysteresis}, overshoots \cite{mitra2018wetting} and redistribution \cite{Raats_vD}. To account for hysteresis in a comparatively simple way, the play-type hysteresis model was proposed in \cite{Beliaev2001} based on thermodynamic considerations.  It reads,
\begin{equation}
p\in \tfrac{1}{2}(\Pdr(S)+\Pim(S))- \tfrac{1}{2}(\Pdr(S)-\Pim(S))\;\sgn(\p_t S).\label{Eq:playtype}
\end{equation}
Observe that, this gives $\p_t S\geq 0$ when $p=\Pim(S)$ and $\p_t S\leq 0$ when $p=\Pdr(S)$. However, $\Pim(S)<p<\Pdr(S)$ forces that $\p_t S=0$ which implies that the scanning curves for this model are vertical lines having constant saturation. Because of this simple structure, it can be considered to be the simplest possible model that addresses hysteresis.

The play-type hysteresis model has been studied extensively analytically due to its local and closed-form structure in contrast to other more complicated models such as the Lenhard-Parker model \cite{parker1987parametric}.  If the $\sgn$ graph in \eqref{Eq:playtype} is regularised, then together with \eqref{Eq:Richard}, it constitutes a nonlinear pseudo-parabolic equation for $S$ \cite{Cao2015688,lamacz2011well,SCHWEIZER20125594,mikelic2010global}. 
Existence results for such pseudo-parabolic equations can be found in \cite{bertsch2013pseudoparabolic,bohm1985diffusion,bohm1985nonlinear}. The existence of solutions of the regularised play-type hysteresis model with degenerate capillary pressure and permeability functions is shown in   \cite{SCHWEIZER20125594}. Existence for the two-phase case is discussed in \cite{koch2013}. For the constant relative permeability case, the existence of weak solutions for the unregularised play-type hysteresis model is shown in \cite{schweizer2007averaging}.  In  the same paper, an upscaled version of the play-type model is also  derived. Uniqueness is shown in \cite{Cao2015688} for the two-phase regularised case (i.e. with dynamic capillarity).
 In \cite{schweizer2012instability} it is shown that the play-type hysteresis model does not define an $L^1$-contraction. This leads to  unstable planar fronts.
Travelling wave solutions are investigated in \cite{VANDUIJN2018232,mitra2018wetting,el2018traveling,mitra2019fronts}. For mathematical analysis, in many cases the pseudo-parabolic system is split into an elliptic equation coupled with an ordinary differential equation \cite{SCHWEIZER20125594,lamacz2011well,bohm1985diffusion,bohm1985nonlinear,cao2015uniqueness}.

However, it was pointed out in \cite{Kmitra2017} that due to approximation of the scanning curves by vertical segments,  play-type hysteresis model makes certain physically inaccurate predictions. For example, it predicts that in many cases water will not redistribute when two columns having constant but different saturations are joined together. In \cite{mitra2018wetting} it is shown that the model predicts infinitely many interior maxima of saturation (called \emph{overshoots} in this context) for high enough injection rates through a long column. However, only finite number of overshoots are observed from experiments \cite{dicarlo2004experimental}. Moreover, it is well documented that the numerical methods incorporating the play-type model become unstable if the regularisation parameter is sent to zero \cite{Zhang2017,VANDUIJN2018232,schneider2018stable,brokate2012numerical}.  This motivated the extension of the play-type hysteresis model (EPH)  given by \eqref{Eq:model} in \cite{Kmitra2017}. An equivalent expression was derived in \cite[Eq. (35)]{Beliaev2001} using thermodynamic arguments. It was used in \cite{Kmitra2017} to cover all cases of horizontal redistribution and in \cite{mitra2018wetting} to explain the occurrence of finitely many overshoots, see also \cite{MyThesis}.

In this paper, we investigate the existence of weak solutions of the regularised EPH model and analyse the properties of its solutions.
An alternative form of \eqref{Eq:ExPlayModel} is proposed  in \Cref{Sec:MathForm} and mathematical preliminaries are stated. In \Cref{Sec:Existence}, existence of solutions for the model  given by \eqref{Eq:ParaXode} is proven using Rothe's method. 
In \Cref{Sec:BoundsandLimits}, it is shown that a maximum principle holds for the solutions under certain assumptions. This also gives the existence of solutions for the degenerate EPH model.
\Cref{Sec:EpsToZero} is dedicated to investigating the behavior of  the solutions when the regularisation  parameter is passed to 0. It is shown that if a limiting solution  $(S,p)$ exists then $p\in [\Pim(S),\Pdr(S)]$ almost everywhere.

\section{Mathematical formulation}\label{Sec:MathForm}
Let $\Om\subset\R^d$ be a bounded open domain with $\p\Om\in C^1$. The $L^2(\Om)$ inner product in this domain is denoted by $(\cdot,\cdot)$, whereas $\|\cdot\|$ and $\|\cdot\|_{p}$ denotes the $L^2(\Om)$ and $L^p(\Om)$ norms respectively for $1\leq p\leq \infty$. For any other space $V(\Om)$, the norm is denoted by $\|\cdot\|_V$. Further, $W^{k,p}$ denotes the Sobolev space containing functions that have up to $k^{\mathrm{th}}$ order derivatives in $L^p(\Om)$. In particular, $H^k(\Om):=W^{k,2}(\Om)$ and $H^k_0:=\{u\in H^k: u=0 \text{ on } \p\Om \text{ in a trace sense}\}$. Let $H^{-1}(\Om)$ denote the dual of $H^1_0$ and $\langle\cdot,\,\cdot\rangle$ the duality pairing of $H^1_0(\Om)$ with $H^{-1}(\Om)$.  The space of function having upto  $\ell^{\mathrm{th}}$ order space derivatives $\a$-H\"older continuous, will be referred to as $C^{\ell,\a}$ with $C^\ell:=C^{\ell,0}$.

Let $T>0$ represent a maximum time with $Q:=\Om\times (0,T]$. The Bochner space $L^p(0,T;X(\Om))$ represents the space of functions $u:Q\to \R$ having norm $\|u\|_{L^p(0,T;X(\Om))}:=\left (\int^T_0 \|u(t)\|_X^p dt\right )^{1/p} <\infty$.  Finally we introduce the space
$$
\W:=\{u\in L^2(0,T;H^1_0(\Om)): \p_t u \in L^2(0,T;H^{-1}(\Om))\}.
$$
 Following \cite{simon1986compact}, $\W$ is compactly embedded in $L^2(0,T;L^2(\Om))$, $\W\hookrightarrow L^2(0,T;L^2(\Om))$. Moreover, $\W$ is continuously embedded in $C(0,T;L^2(\Om))$ (space of time continuous functions $u$ with respect to the $L^2(\Om)$ norm, i.e. $\|u(t_n)-u(t)\|\to 0$ for any $t\in [0,T]$ and $t_p\to t$). 
 
The inequalities that are used repeatedly in our analysis include Cauchy-Schwarz inequality; Poincar$\acute{\text{e}}$ inequality, with $C_\Om$ denoting the constant appearing in it, i.e. $\|u\|\leq C_\Om\|\del u\|$ for $u\in H^1_0(\Om)$; Young's inequality, 
stating that for $a,b\in\R$  and $\sigma>0$ one has
\begin{equation}
ab\leq \frac{1}{2\sigma}a^2+ \frac{\sigma}{2}b^2, 
\end{equation}
and finally the discrete Gronwall's lemma, which states that if $\{y_n\}$, $\{f_n\}$ and $\{g_n\}$ are non-negative sequences and
\begin{equation*}
y_n \leq f_n  + \underset{0 \leq k< n}{\sum}g_k y_k, \quad \text{ for all }   n\geq 0,
\end{equation*}
then
\begin{equation*}
y_n \leq f_n  + \underset{0 \leq k< n}{\sum}f_kg_k \exp\left(\underset{k<j<n}{\sum}g_j\right), \quad  \text{ for all }   n\geq 0.
\end{equation*} 
In our notation $[\cdot]_+$ represents the positive part function, i.e. $[\zeta]_+:=\max\{\zeta,0\}$. Similarly, $[\zeta]_-:=\min\{\zeta,0\}$. Further, $C>0$ will denote a generic constant throughout the paper.

\subsection{Problem statement and assumptions}
For the most part, in this paper we study the system \eqref{Eq:ParaXode}. Completed with initial and boundary conditions, it is written as
\begin{numcases}{\hypertarget{link:Ps}{(\mathcal{P}s)}}
\p_t u + \del\cdot\vfl(u,v)=\del\cdot[\Df(u,v)\del u ]+\Psi_1(u,v)  & in $ Q$,\label{Eq:gde1}\\
\p_t v=\Psi_2(u,v)  & in $ \bar{Q}$,\label{Eq:gde2}\\
u(\cdot,0)=u_0(\cdot), \quad v(\cdot,0)=v_0(\cdot)  & in $\Om$, \label{Eq:IC}\\
u=0 & on $\p\Om\times[0,T]$. \label{Eq:BC}
\end{numcases}
The relation between $\hyperlink{link:Ps}{(\mathcal{P}s)}$ and EPH is established in \Cref{Sec:RelationEPHandPw}.

The properties of the functions $\Df$, $\Psi_{1\slash 2}$, $u_0$ and $v_0$ are as follows:
\begin{enumerate}[label=(A\theEUassumption)]
\item $\Df\in C^1(\R^2)$; $0<\Df_m\leq \Df(u,v)\leq \Df_M<\infty$ for $u,v\in \R$.  \label{ass:D}\stepcounter{EUassumption}
\item $\vfl:\R^2\to \R^d$ with the $j^{\mathrm{th}}$-component  $F_j\in C^1(\R^2)$ satisfying $|\vfl|\leq F_M$ for some $F_M>0$.  \label{ass:F}\stepcounter{EUassumption}
\item $|\Psi_j(u_1,v_1)-\Psi_j(u_2,v_2)|\leq \Psi_u|u_1-u_2|+\Psi_v|v_1-v_2|$ for $j\in\{1,2\}$ and constants $\Psi_u,\Psi_v>0$. Moreover, $\frac{\Psi_j(u,v)-\Psi_j(u,v_1)}{v-v_1}, \frac{\Psi_j(u,v)-\Psi_j(u_1,v)}{u-u_1}\leq 0$ for all $u,u_1,v,v_1\in \R$. \label{ass:Psi}\stepcounter{EUassumption}
\item  $u_0\in L^\infty(\Om)$ and $v_0\in H^1(\Om)\cap L^\infty(\Om)$, such that $\Psi_2(0,v_0)=0$ at $\p\Om$. \label{ass:Ini}
\stepcounter{EUassumption}
\end{enumerate}
Observe that, \eqref{Eq:gde2} combined with  \ref{ass:Psi}--\ref{ass:Ini} imply that $v(t)$ restricted to $\p\Om$ remains unchanged for all $t>0$. The weak solution of $\hyperlink{link:Ps}{(\mathcal{P}s)}$ is defined as
\begin{definition}[Weak solution of $\hyperlink{link:Ps}{(\mathcal{P}s)}$]
The pair $(u,v)$ with $u\in \W$ and $v \in H^1(0,T;L^2(\Om))\cap L^2(0,T;H^1(\Om))$ is a weak solution of $\hyperlink{link:Ps}{(\mathcal{P}s)}$ if $u(0)=u_0$, $v(0)=v_0$ and it satisfies for all $\phi\in L^2(0,T;H^1_0(\Om))$ and $\xi\in L^2(Q)$
\begin{subequations}
\begin{numcases}{\hypertarget{link:Pw}{(\mathcal{P}w)}}
\smallint_0^T\langle \p_t u, \phi\rangle + \smallint_0^T(\Df(u,v)\del u, \del \phi)=\smallint_0^T(\vfl(u,v),\del \phi)+\smallint_0^T (\Psi_1(u,v),\phi);\label{Eq:weak1}\\
\smallint_0^T (\p_t v,\xi)= \smallint_0^T (\Psi_2(u,v),\xi).\label{Eq:weak2}
\end{numcases}
\end{subequations}
\label{def:WeakSol}
\end{definition}

\noindent
Due to the continuous embedding of $\W$ in $C(0,T;L^2(\Om))$, $u(0)$ and $v(0)$ are well-defined.

\begin{remark}[Boundary conditions]
For simplicity, a zero Dirichlet condition has been assumed at the boundary for our current analysis. Nevertheless, Definition \ref{def:WeakSol} can be generalised to include Dirichlet, Neumann, and mixed type boundary conditions. 
\end{remark}

\begin{remark}[Assumptions]
The condition in \ref{ass:Psi} that $\Psi_j$ is decreasing with respect to both the variables $u$ and $v$ is not required for proving the existence of the weak solutions. It is used  in \Cref{Sec:BoundsandLimits} to prove that the solutions are bounded in $L^\infty$. Similarly $u_0,v_0\in L^\infty(\Om)$ is only used for proving the $L^\infty$ bound. For proving the existence result in  \Cref{Theo:Existence}, assuming $u_0\in L^2(\Om)$ and $v_0\in H^1(\Om)$ is sufficient.
\end{remark}

\subsection{Relation between the regularised EPH model and $(\mathcal{P}s)$}\label{Sec:RelationEPHandPw}
Although \eqref{Eq:ExPlayModel} is closer to the expressions of the hysteresis models used in practice, $\hyperlink{link:Ps}{(\mathcal{P}s)}$ is more convenient to analyse mathematically. We show below that the EPH model with the $\sgn(\cdot)$ graph regularised is a particular  case of $\hyperlink{link:Ps}{(\mathcal{P}s)}$. To be more precise, we start with assuming the following properties of the functions $\Pim$, $\Pdr$ and $k$ used in \eqref{Eq:ExPlayModel}, \eqref{Eq:StandRelationship} and \eqref{Eq:playtype}, which are consistent with the data obtained from experiments 
 \cite{helmig1997multiphase,Morrow_Harris,poulovassilis1970hysteresis}. 
\begin{enumerate}[label=(P\theEUXproperties)]
 \setlength\itemsep{-0.8em}
\item $\Pim,\, \Pdr\in C^1((0,1))$; $\Pim'(S),\,\Pdr'(S)<0$; $\Pdr(S)>\Pim(S)$ for all $S\in(0,1)$; $\Pim(1)=\Pdr(1)$ and $$\lim\limits_{S\searrow 0}\Pim(S)=\lim\limits_{S\searrow 0}\Pdr(S)=\lim\limits_{S\nearrow 1}\left [-\Pim'(S)\right ]=\lim\limits_{S\nearrow 1}\left [-\Pdr'(S)\right ]=\infty.$$\stepcounter{EUXproperties}\label{prop:Pc}
\item $k\in C^1(\R)$; $k(S)=k(0)\geq 0$ for $S\leq 0$, $k(S)=k(1)$ for $S\geq 1$ and $k(0)<k(S)<k(1)$ for $0<S<1$.\stepcounter{EUXproperties}\label{prop:K}
\end{enumerate}

The set of equations \eqref{Eq:ExPlayModel} cannot directly be reduced to the standard weak formulation used for partial differential equations. Thus, we consider
 an alternative expression to \eqref{Eq:model} representing the EPH model. Completed with suitable boundary and initial conditions, the model reads
\begin{numcases}{\hypertarget{link:EPH}{\mathrm{(EPH)}}}
\p_t S=\del\cdot[k(S)(\del p-\bm{g})]  & in $Q$,\hspace{2em}\label{Eq:Rgde1}\\
p\in \tfrac{1}{2}(\Pdr(S) + \Pim(S))-\tfrac{1}{2}(\Pdr(S) - \Pim(S))\;\sgn\left[\p_t (S + b(p))\right]  &in $ \bar{Q}$,\hspace{2em}\label{Eq:Rgde2}\\
S(\cdot,0)=S_0(\cdot),\quad p(\cdot,0)=p_0(\cdot)   &in $\Om$,\hspace{2em}\label{Eq:RIC}\\
p=0 &\hspace{-5em}on $\p\Om\times[0,T]$.\hspace{2em} \label{Eq:RBC}
\end{numcases}

Observe that, for relation \eqref{Eq:Rgde2} the scanning curves are given by $S+b(p)=\textit{constant}$, instead of $\h(S)+p=\textit{constant}$ as used in \cite{Kmitra2017}. Moreover, if $p=\Pim(S)$ in some open subset of $\Om$, then from \eqref{EUXSign} and \eqref{Eq:Rgde2}, $$\p_t S + \p_t b(\Pim(S))\geq 0 \quad \text{ or }\quad (1+ b'(\Pim(S))\,\Pim'(S))\, \p_t S\geq 0.$$ The directionality imposed by hysteresis then demands that $\p_t S\geq 0$ in this case. Hence, for consistency $1+ b'(\Pim(S))\,\Pim'(S)>0$ has to be satisfied. Similar result holds if $p=\Pdr(S)$ in some open subset of $\Om$. Combining these observations, we assume that
\begin{enumerate}[label=(P\theEUXproperties)]
 \setlength\itemsep{0.5em}
\item $b\in C^1(\R)$ with $b(0)=0$ 
 and
\begin{equation}
0< b'(p_c^{(j)}(S))<-\frac{1}{{p_c^{(j)}}'(S)} \text{ for all } 0<S<1 \text{ and } j\in \{i,d\}.\label{Eq:BoundOfbprime}
\end{equation}\stepcounter{EUXproperties}\label{prop:b}
\end{enumerate}
Here, \eqref{Eq:BoundOfbprime} is the consistency criterion, a counterpart of which was stated in \cite[Eq. (2.7)]{Kmitra2017} for $\h(S)$.

\begin{remark}[Consistency of the scanning curves]
The inequality \eqref{Eq:BoundOfbprime} also guarantees that  any scanning curve passing through $(S_1,p_1)$ for arbitrary $S_1\in (0,1)$ and $p_1\in (\Pim(S_1),\Pdr(S_1))$ intersects $\Pim$ at some $S_i<1$ and $\Pdr$ at some $S_d\in (0,S_i)$. 
\end{remark}

For the initial and boundary conditions we assume:
\begin{enumerate}[label=(P\theEUXproperties)]
 \setlength\itemsep{0.5em}
\item $S_0\in H^1(\Om)$  and $p_0\in H^1_0(\Om)$. Moreover, an $\epsilon>0$ exists such that $\epsilon\leq S_0\leq1-\epsilon$ and $\Pim(S_0)\leq p_0 \leq \Pdr(S_0)$ a.e. in $\Om$.
\label{prop:Ini}\stepcounter{EUXproperties}
\end{enumerate}
The condition $\Pim(S_0)\leq p_0 \leq \Pdr(S_0)$ comes from the physical constraint that $p_0$ stays intermediate to $\Pim$ and $\Pdr$ when only hysteretic effects are considered.

\begin{remark} [Degeneracy and physical bounds]
If $\lim\limits_{S\searrow 0} p_c^{(j)}(S)=\infty$ for $j\in\{i,d\}$ or $k(0)=0$, then $S=0$ at any interior point in the domain makes the problem degenerate since \eqref{Eq:Rgde1} looses its parabolicity. Similarly, the problem becomes degenerate at $S=1$ since $\Pim'(1)=\Pdr'(1)=-\infty$. Moreover, $S$ must satisfy the physical bound $0\leq S\leq 1$, otherwise $\Pim(S)$ and $\Pdr(S)$ become ill defined. Treating the degeneracy and proving the physical bounds pose extra challenges in considering the problem \hyperlink{link:EPH}{(EPH)} compared to $\hyperlink{link:Ps}{(\mathcal{P}s)}$.
\end{remark}

Having stated the properties of the associated functions, we now show the equivalence of the regularised \hyperlink{link:EPH}{(EPH)} model and $\hyperlink{link:Ps}{(\mathcal{P}s)}$. For this purpose,
the following transformations are introduced:
\begin{equation}
u:=b(p)=\int_0^p b'(\varrho)d\vr\quad \text{ and }\quad  v:=S+b(p)=S+u.\label{def:uv}
\end{equation}
We recast \hyperlink{link:EPH}{(EPH)} in terms of $u$ and $v$. Since $\lim_{S\searrow 0} \Pdr(S)=\infty$ we get integrating \eqref{Eq:BoundOfbprime},
\begin{align}\label{eq:UmVmLeq1}
b(\infty)-b(\Pdr(1))=\int^\infty_{\Pdr(1)} b'(p)\,dp\leq -\int^\infty_{\Pdr(1)} \frac{dp}{\Pdr'(\Pdr^{-1}(p))}\leq 1.
\end{align} 
Note that, if $S\to 0$ then $v\to u$. From these observations, we define the terms $U_m,V_m,U_M,V_M$ that will become important later;
\begin{equation}
U_M=V_m=b(\infty), \quad U_m=b(\Pim(1))=b(\Pdr(1)),\quad V_M=1+U_m>V_m.
\end{equation}
The $V_M>V_m$ inequality follows from \eqref{eq:UmVmLeq1}. Next, we express $p=\Pim(S)$ in terms of a relation between $u$ and $v$. From \eqref{def:uv}, $p=\Pim(S)$ implies $v=v_{i}(u):=(\Pim)^{-1}(b^{-1}(u)) +u$. Recalling \ref{prop:b}, ${v_{i}}'(u)=\frac{1}{b'(\Pim(S))\Pim'(S)} + 1<0$ where $S=(\Pim)^{-1}(b^{-1}(u))$. Hence, the inverse of $v_{i}(u)$ exists. Let it be denoted by $\rim(u)$. In a similar way $\rdr(u)$ is defined. The definitions can alternatively be summarized into
\begin{equation}
\rim:=((\Pim)^{-1}\circ b^{-1}+1)^{-1},\qquad \rdr:=((\Pdr)^{-1}\circ b^{-1}+1)^{-1}.\label{def:Rimdr}
\end{equation}
From \ref{prop:Pc} and \ref{prop:b}, one immediately obtains
\begin{enumerate}[label=(P\theEUXproperties)]
 \setlength\itemsep{0.5em}
\item There exists a constant $M_\rho>0$ such that $-M_\rho<\rim',\,\rdr'<0$ for $v\in[V_m,V_M]$. Further, $\rdr(v)>\rim(v)$ for all $v\in (V_m,V_M)$; $\rim(V_m)=\rdr(V_m)=U_M$ and $\rim(V_M)=\rdr(V_M)=U_m$. \stepcounter{EUXproperties}\label{prop:Rc}
\end{enumerate}
\Cref{fig:SpuvPlane} (right) plots $\rim$ and $\rdr$ curves calculated from realistic $\Pim$ and $\Pdr$ curves shown in \Cref{fig:SpuvPlane} (left). By definition, $p=\Pim(S)$ iff $u=\rim(v)$, and $p=\Pdr(S)$ iff $u=\rdr(v)$. Furthermore, $\p_t v=0$ implies $\Pim(S)\leq p \leq \Pdr(S)$ which is same as having $\rim(v)\leq u \leq \rdr(v)$.
Thus, an equivalent expression to \eqref{Eq:Rgde2} is 
\begin{align}
u\in \tfrac{1}{2}(\rdr(v) + \rim(v)) -\tfrac{1}{2}(\rdr(v) - \rim(v))\; \sgn(\p_t v). \label{Eq:X}
\end{align}

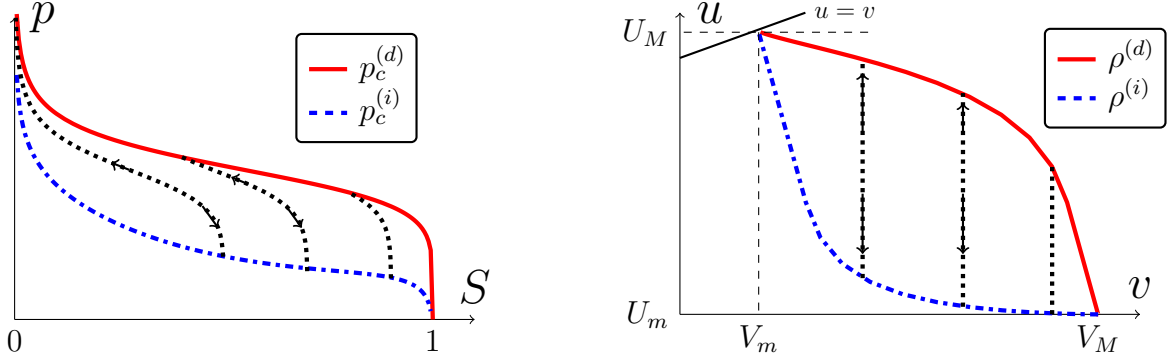
\begin{figure}
\begin{subfigure}{.5\textwidth}
\begin{tikzpicture}[xscale=5.5,yscale=4,domain=0:1,samples=100]

\draw[->] (0,0.2)--(0,1.2) node[right, scale=1.5] {$p$};
\draw[->] (0,0.2)--(1.1,0.2) node[above,scale=1.5] {$S$};
\node[scale=1,below] at (1,0.2) {1};
\node[scale=1,below] at (0,0.2) {0};

\draw[ultra thick, red](0.005,1.2126)--(0.01,1.1281)--(0.015,1.0813)--(0.02,1.0491)--(0.025,1.0247)--(0.03,1.0052)--(0.035,0.98886)--(0.04,0.97488)--(0.045,0.96267)--(0.05,0.95183)--(0.055,0.94209)--(0.06,0.93325)--(0.065,0.92515)--(0.07,0.91769)--(0.075,0.91076)--(0.08,0.9043)--(0.085,0.89824)--(0.09,0.89255)--(0.095,0.88717)--(0.1,0.88207)--(0.12,0.86398)--(0.14,0.84867)--(0.16,0.83537)--(0.18,0.82357)--(0.2,0.81292)--(0.22,0.80319)--(0.24,0.79421)--(0.26,0.78584)--(0.28,0.77797)--(0.3,0.77053)--(0.32,0.76346)--(0.34,0.75668)--(0.36,0.75017)--(0.38,0.74388)--(0.4,0.73778)--(0.42,0.73184)--(0.44,0.72604)--(0.46,0.72034)--(0.48,0.71474)--(0.5,0.70921)--(0.52,0.70372)--(0.54,0.69828)--(0.56,0.69284)--(0.58,0.68741)--(0.6,0.68195)--(0.62,0.67646)--(0.64,0.6709)--(0.66,0.66526)--(0.68,0.65952)--(0.7,0.65364)--(0.72,0.6476)--(0.74,0.64136)--(0.76,0.63486)--(0.78,0.62807)--(0.8,0.62091)--(0.82,0.61328)--(0.84,0.60508)--(0.86,0.59614)--(0.88,0.58623)--(0.9,0.57498)--(0.905,0.57191)--(0.91,0.5687)--(0.915,0.56534)--(0.92,0.56183)--(0.925,0.55813)--(0.93,0.55422)--(0.935,0.55008)--(0.94,0.54567)--(0.945,0.54094)--(0.95,0.53584)--(0.955,0.53028)--(0.96,0.52417)--(0.965,0.51737)--(0.97,0.50967)--(0.975,0.50075)--(0.98,0.49011)--(0.985,0.47679)--(0.99,0.45873)--(0.995,0.42958)--(1,0.2);

\draw[blue, ultra thick, dashdotted] (0.005,1.0106)--(0.01,0.92415)--(0.015,0.87536)--(0.02,0.84128)--(0.025,0.81499)--(0.03,0.79353)--(0.035,0.77535)--(0.04,0.75952)--(0.045,0.74548)--(0.05,0.73283)--(0.055,0.7213)--(0.06,0.71069)--(0.065,0.70084)--(0.07,0.69165)--(0.075,0.68301)--(0.08,0.67486)--(0.085,0.66713)--(0.09,0.65979)--(0.095,0.65278)--(0.1,0.64607)--(0.105,0.63963)--(0.11,0.63345)--(0.115,0.62749)--(0.12,0.62174)--(0.125,0.61618)--(0.13,0.61079)--(0.135,0.60558)--(0.14,0.60051)--(0.145,0.59559)--(0.15,0.59081)--(0.155,0.58615)--(0.16,0.58161)--(0.165,0.57719)--(0.17,0.57287)--(0.175,0.56865)--(0.18,0.56453)--(0.185,0.5605)--(0.19,0.55656)--(0.195,0.5527)--(0.2,0.54892)--(0.22,0.53455)--(0.24,0.52125)--(0.26,0.50888)--(0.28,0.49733)--(0.3,0.48653)--(0.32,0.47642)--(0.34,0.46692)--(0.36,0.45801)--(0.38,0.44964)--(0.4,0.44178)--(0.42,0.4344)--(0.44,0.42748)--(0.46,0.42098)--(0.48,0.4149)--(0.5,0.40921)--(0.52,0.40388)--(0.54,0.39892)--(0.56,0.39428)--(0.58,0.38997)--(0.6,0.38595)--(0.62,0.38222)--(0.64,0.37874)--(0.66,0.3755)--(0.68,0.37248)--(0.7,0.36964)--(0.72,0.36696)--(0.74,0.3644)--(0.76,0.3619)--(0.78,0.35943)--(0.8,0.35691)--(0.82,0.35424)--(0.84,0.35132)--(0.86,0.34798)--(0.88,0.34399)--(0.9,0.33898)--(0.905,0.33752)--(0.91,0.33594)--(0.915,0.33423)--(0.92,0.33239)--(0.925,0.33038)--(0.93,0.32818)--(0.935,0.32577)--(0.94,0.32311)--(0.945,0.32015)--(0.95,0.31684)--(0.955,0.31309)--(0.96,0.30881)--(0.965,0.30386)--(0.97,0.29803)--(0.975,0.291)--(0.98,0.28227)--(0.985,0.27088)--(0.99,0.25477)--(0.995,0.22759);

\draw[ultra thick,dotted] (0.49855,0.408)--(0.49735,0.432)--(0.4953,0.456)--(0.49195,0.48)--(0.48663,0.504)--(0.47846,0.528)--(0.46636,0.552)--(0.44915,0.576)--(0.4258,0.6)--(0.39575,0.624)--(0.35935,0.648)--(0.31803,0.672)--(0.27419,0.696)--(0.23061,0.72)--(0.18981,0.744)--(0.15353,0.768)--(0.12258,0.792)--(0.097009,0.816)--(0.076374,0.84)--(0.059985,0.864)--(0.047104,0.888)--(0.037039,0.912)--(0.029197,0.936)--(0.02309,0.96)--(0.018328,0.984)--(0.014607,1.008)--(0.011691,1.032)--(0.0093966,1.056)--(0.0075848,1.08)--(0.0061483,1.104)--(0.0050045,1.128)--(0.00409,1.152)--(0.0033558,1.176)--(0.002764,1.2);

\draw[->,thick] (0.44915,0.576)--(0.48663,0.504);
\draw[->,thick] (0.27419,0.696)--(0.23061,0.72);

\draw[ultra thick,dotted] (0.69962,0.36)--(0.69924,0.384)--(0.69855,0.408)--(0.69735,0.432)--(0.6953,0.456)--(0.69195,0.48)--(0.68663,0.504)--(0.67846,0.528)--(0.66636,0.552)--(0.64915,0.576)--(0.6258,0.6)--(0.59575,0.624)--(0.55935,0.648)--(0.51803,0.672)--(0.47419,0.696)--(0.43061,0.72)--(0.38981,0.744);

\draw[->,thick] (0.64915,0.576)--(0.68663,0.504);
\draw[->,thick] (0.55935,0.648)--(0.51803,0.672);

\draw[ultra thick,dotted] (0.89982,0.336)--(0.89962,0.36)--(0.89924,0.384)--(0.89855,0.408)--(0.89735,0.432)--(0.8953,0.456)--(0.89195,0.48)--(0.88663,0.504)--(0.87846,0.528)--(0.86636,0.552)--(0.84915,0.576)--(0.8258,0.6)--(0.79575,0.624);


\node[draw=black,thick,rounded corners=2pt,below left=2mm, scale=1] at (1,1.2){%
\begin{tabular}{@{}r@{ }l@{}}
 \raisebox{2pt}{\tikz{\draw[ultra thick, red, domain=0.05:0.06] (0,0) -- (5mm,0);}}&$\Pdr$\\
  \raisebox{2pt}{\tikz{\draw[dashed,ultra thick, blue, domain=0.05:0.06] (0,0) -- (5mm,0);}}&$\Pim$
\end{tabular}};
\end{tikzpicture}
\end{subfigure}
\begin{subfigure}{.5\textwidth}
\begin{tikzpicture}[xscale=5.5,yscale=4,domain=0:1,samples=100]

\draw[->] (0,0)--(0,1) node[right, scale=1.5] {$u$};
\draw[->] (0,0)--(1.1,0) node[above,scale=1.5] {$v$};
\node[scale=1,left] at (0,0) {$U_m$};

\draw[ultra thick, red] (1,0)--(1,1.4089e-16)--(1,9.417e-13)--(1,7.0296e-11)--(1,1.4999e-09)--(1,1.6107e-08)--(1,1.1204e-07)--(1,5.7752e-07)--(1,2.3906e-06)--(1,8.3691e-06)--(0.99999,2.5672e-05)--(0.99999,7.0762e-05)--(0.99996,0.00017856)--(0.99992,0.00041836)--(0.99982,0.00092013)--(0.99962,0.0019161)--(0.99924,0.0038041)--(0.99855,0.0072397)--(0.99735,0.013265)--(0.9953,0.023476)--(0.99195,0.040225)--(0.98663,0.066826)--(0.97846,0.10768)--(0.96636,0.16819)--(0.94915,0.25423)--(0.9258,0.37101)--(0.88956,0.49029)--(0.83589,0.58598)--(0.7684,0.66167)--(0.69258,0.72103)--(0.61469,0.76734)--(0.5403,0.80341)--(0.47335,0.83151)--(0.41584,0.85343)--(0.36813,0.87058)--(0.32956,0.88404)--(0.2989,0.89464)--(0.27481,0.90302)--(0.25601,0.90967)--(0.24139,0.91498)--(0.23002,0.91922)--(0.22118,0.92264)--(0.21429,0.9254)--(0.20891,0.92763)--(0.20468,0.92945)--(0.20136,0.93093)--(0.19873,0.93215)--(0.19664,0.93315)--(0.19497,0.93397)--(0.19364,0.93466)--(0.19257,0.93523);

\draw[blue, ultra thick, dashdotted] (1,0)--(1,1.4089e-16)--(0.99999,9.417e-13)--(0.99996,7.0296e-11)--(0.9999,1.4999e-09)--(0.99978,1.6107e-08)--(0.99953,1.1204e-07)--(0.99903,5.7752e-07)--(0.99811,2.3906e-06)--(0.99644,8.3691e-06)--(0.99356,2.5672e-05)--(0.98874,7.0762e-05)--(0.98091,0.00017856)--(0.96864,0.00041836)--(0.95003,0.00092013)--(0.92288,0.0019161)--(0.88495,0.0038041)--(0.83451,0.0072397)--(0.7712,0.013265)--(0.69674,0.023476)--(0.61517,0.040225)--(0.53236,0.066826)--(0.45473,0.10768)--(0.38795,0.16819)--(0.33605,0.25423)--(0.30114,0.37101)--(0.27729,0.49029)--(0.25815,0.58598)--(0.24301,0.66167)--(0.23114,0.72103)--(0.22188,0.76734)--(0.21466,0.80341)--(0.20904,0.83151)--(0.20466,0.85343)--(0.20123,0.87058)--(0.19854,0.88404)--(0.19642,0.89464)--(0.19474,0.90302)--(0.19341,0.90967)--(0.19235,0.91498)--(0.1915,0.91922)--(0.19082,0.92264)--(0.19027,0.9254)--(0.18982,0.92763)--(0.18946,0.92945)--(0.18916,0.93093)--(0.18891,0.93215)--(0.18871,0.93315)--(0.18855,0.93397)--(0.18841,0.93466)--(0.1883,0.93523);

\draw[dashed] (0.1883,0.93523)--(.1883,0) node[scale=1,below] {$V_m$};
\node[scale=1,below] at (1,0) {$V_M$};
\draw[dashed] (.45,0.93523)--(0,0.93523) node[scale=1,left] {$U_M$};
\draw[thick] (0,0.85)--(.3,1) node[scale=0.8,right] {$u=v$};

\draw[ultra thick,dotted] (0.43683,0.120)--(.4368,.843);
\draw[thick,->] (0.43683,0.60)--(0.43683,0.8);
\draw[thick,->] (0.43683,0.4)--(0.43683,0.2);

\draw[ultra thick,dotted] (0.6768,0.025)--(0.6768,.7335);
\draw[thick,->] (0.6768,0.4)--(0.6768,0.2);
\draw[thick,->] (0.6768,0.6)--(0.6768,0.7);

\draw[ultra thick,dotted] (0.89,0)--(0.89,.5);

\node[draw=black,thick,rounded corners=2pt,below left=2mm, scale=1] at (1.2,1){%
\begin{tabular}{@{}r@{ }l@{}}
 \raisebox{2pt}{\tikz{\draw[ultra thick, red, domain=0.05:0.06] (0,0) -- (5mm,0);}}&$\rdr$\\
  \raisebox{2pt}{\tikz{\draw[dashed,ultra thick, blue, domain=0.05:0.06] (0,0) -- (5mm,0);}}&$\rim$
\end{tabular}};
\end{tikzpicture}
\end{subfigure}
\caption{(left) Realistic $\Pim$ and $\Pdr$ curves  in the $S$--$p$ plane calculated using the van Genuchten model \cite{van1980closed}; (right) corresponding $\rim$ and $\rdr$ curves in the $v$--$u$ plain. The $b(p)$ used here is such that $b'(p)=-\frac{1}{2}\max\left(\frac{1}{\Pim'(\Pim^{-1}(p))},\, \frac{1}{\Pdr'(\Pdr^{-1}(p))}\right)$. The black dotted lines correspond to the scanning curves with respect to this choice. In particular, curves for $S+b(p)=0.5,\;0.7,\;0.9$ are shown. The values $U_m$, $U_M$, $V_m$ and $V_M$ are marked in the (right) figure.}\label{fig:SpuvPlane}
\end{figure}

Since $\sgn(\cdot)$ is not single-valued, we regularise  \eqref{Eq:X} using the expression
\begin{align}
u\in \tfrac{1}{2}(\rdr(v) + \rim(v)) -\tfrac{1}{2}(\rdr(v) - \rim(v))\; \sgn(\p_t v)-\tau \p_t v,\label{Eq:Xeps}
\end{align}
where $\t>0$ is a regularisation parameter.
 This approach has been used in \cite{lamacz2011well,cao2015uniqueness,ratz2014hysteresis}.
The right most term in \eqref{Eq:Xeps} also has physical significance, since, it gives rise to the dynamic capillarity phenomenon in porous media \cite{Cao2015688,hassanizadeh1993thermodynamic,SCHWEIZER20125594}.
Moreover, expression \eqref{Eq:Xeps} is thermodynamically consistent as it leads to entropy generation as is shown in \cite{Beliaev2001}, see specifically equations (28) and (35).
Since the function $\frac{1}{2}(\rdr(v) - \rim(v))\,\sgn(\zeta) + \tau \zeta$ is increasing with respect to $\zeta$, expression \eqref{Eq:Xeps} can be inverted, yielding the relation \cite{cao2015uniqueness,Beliaev2001,beliaev2001analysis}
\begin{equation}
\p_t v=\Phi_\tau(u,v):= \frac{1}{\tau}
\begin{cases}
\rdr(v)-u &\text{ when } u> \rdr(v),\\
0 &\text{ when } u\in [\rim(v),\rdr(v)],\\
\rim(v)-u &\text{ when } u< \rim(v).
\end{cases}\label{Eq:DefPhi_eps}
\end{equation}
Setting  in \hyperlink{link:EPH}{(EPH)}
\begin{equation}
\Df(u,v)=\frac{k(v-u)}{b'(b^{-1}(u))},\; \vfl(u,v)=\bm{g}k(v-u),\;\Psi_1(u,v)=\Psi_2(u,v)=\Phi_\tau(u,v),\label{Eq:DefkPsi12}
\end{equation}
 we recover $\hyperlink{link:Ps}{(\mathcal{P}s)}$. It is straightforward to verify from \ref{prop:Rc} that $\Psi_1$ and $\Psi_2$, defined as in \eqref{Eq:DefkPsi12}, satisfy assumption \ref{ass:Psi}. Similarly $u_0$ and $v_0$ are consistent with \ref{ass:Ini} when defined from a $(S_0,p_0)$ pair satisfying \ref{prop:Ini}. Furthermore, $U_m<u_0<U_M$ and $V_m<v_0<V_M$ in $\Om$. However, to show that $\Df(u,v)$ defined in \eqref{Eq:DefkPsi12}, satisfies assumption \ref{ass:D} ($\Df\geq \Df_M>0$) we need to  show that $S=v-u$ is bounded away from zero. This is done in \Cref{Sec:BoundsandLimits}.
 
 Based on our discussion so far, we define the weak solution of the \hyperlink{link:EPH}{(EPH)} model for $\tau>0$ as
 
 \begin{definition}[Weak solution of \hyperlink{link:EPH}{(EPH)}]  The pair $(S,p)$ with $p\in \W$, $ S-S_0\in\W$ and $S\in [0,1]$ a.e. in $Q$ is a weak solution of \hyperlink{link:EPH}{(EPH)} for $\tau>0$ if $p(0)=p_0$, $S(0)=S_0$ and it satisfies for all $\phi\in L^2(0,T;H^1_0(\Om))$ and $\xi\in L^2(Q)$,
 \begin{subequations}
\begin{numcases}{\hypertarget{link:Peph}{(\mathcal{P}_{\mathrm{EPH}})}}
\smallint_0^T\langle \p_t S, \phi\rangle= \smallint_0^T(k(S)[\del p-\bm{g}], \del \phi);\label{Eq:EPH1}\\
\smallint_0^T (\p_t (S+b(p)),\xi)=\smallint_0^T (\Phi_\tau(b(p),S+b(p)),\xi);\label{Eq:EPH2}
\end{numcases}
\end{subequations}
where $\Phi_\tau$ is defined in \eqref{Eq:DefPhi_eps}.\label{Def:WeakSolEPH}
\end{definition}

Observe that, according to Definition \ref{Def:WeakSolEPH}, $S$ has a trace on $\p\Om$ that does not change with time, i.e., it is fixed by $S_0$.

\section{Existence of solutions of $(\mathcal{P}w)$}\label{Sec:Existence}
The main existence result of this section is as follows:
\begin{theorem}
Assume \ref{ass:D}--\ref{ass:Ini}. Then $\hyperlink{link:Pw}{(\mathcal{P}w)}$ has a weak solution $(u,v)$ in terms of \Cref{def:WeakSol}. Moreover, $v\in L^\infty(0,T;H^1(\Om))$ and  $\p_t v\in L^\infty(0,T;L^2(\Om))$. 
\label{Theo:Existence}
\end{theorem}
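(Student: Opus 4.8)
The plan is to use Rothe's method: discretise $\hyperlink{link:Ps}{(\mathcal{P}s)}$ in time, establish solvability of each time-step problem, derive a priori estimates uniform in the time step, and pass to the limit by compactness. Fix $N\in\N$, set $\D t=T/N$, $t_n=n\,\D t$, and given $(u^{n-1},v^{n-1})$ seek $(u^n,v^n)$ with $u^n\in H^1_0(\Om)$ solving the backward-Euler scheme
\begin{subequations}
\begin{align}
\tfrac{1}{\D t}(u^n-u^{n-1})+\del\cdot\vfl(u^n,v^n)&=\del\cdot[\Df(u^n,v^n)\del u^n]+\Psi_1(u^n,v^n),\\
\tfrac{1}{\D t}(v^n-v^{n-1})&=\Psi_2(u^n,v^n),
\end{align}
\end{subequations}
the first line in the weak sense. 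To decouple, note that by the Lipschitz bound \ref{ass:Psi}, for $\D t$ small enough that $\D t\,\Psi_v<1$ the pointwise map $v\mapsto v^{n-1}+\D t\,\Psi_2(u^n,v)$ is a contraction; hence the ODE defines $v^n=G(u^n)$ uniquely, with $G$ Lipschitz from $L^2(\Om)$ to itself and mapping $H^1(\Om)$ into $H^1(\Om)$. Substituting $v^n=G(u^n)$ reduces the step to a single quasilinear elliptic equation for $u^n$.

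For solvability of this elliptic problem I would use a Schauder (or Leray--Schauder) fixed-point argument. Given $w\in L^2(\Om)$, solve the \emph{linear} problem obtained by freezing the coefficients, i.e.\ find $u\in H^1_0(\Om)$ with $\tfrac{1}{\D t}(u-u^{n-1})+\del\cdot\vfl(w,G(w))=\del\cdot[\Df(w,G(w))\del u]+\Psi_1(w,G(w))$; this is well posed by Lax--Milgram since \ref{ass:D} makes the bilinear form coercive and bounded uniformly in $w$. The resulting operator $T:w\mapsto u$ is continuous and, since its range is bounded in $H^1_0(\Om)$ and $H^1_0(\Om)\hookrightarrow L^2(\Om)$ compactly, also compact; the single-step energy bound below then furnishes a fixed point $u^n=T(u^n)$, and hence a solution of the time-step problem.

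The core of the proof is the set of a priori estimates uniform in $\D t$. Testing the first discrete equation with $\D t\,u^n$ and summing, together with taking $L^2(\Om)$ norms in the discrete ODE, one obtains after using the identity $(u^n-u^{n-1},u^n)=\tfrac12(\|u^n\|^2-\|u^{n-1}\|^2+\|u^n-u^{n-1}\|^2)$, the ellipticity $\Df\ge\Df_m$, the bounds \ref{ass:F}, \ref{ass:Psi}, Young's inequality and a coupled discrete Gronwall argument on $\|u^n\|^2+\|v^n\|^2$,
\begin{equation*}
\max_n(\|u^n\|^2+\|v^n\|^2)+\D t\sum_n\|\del u^n\|^2\le C,
\end{equation*}
with $C$ depending only on the data; the discrete ODE then also gives $\max_n\|\tfrac{1}{\D t}(v^n-v^{n-1})\|\le C$, i.e.\ the claimed $L^\infty(0,T;L^2(\Om))$ control of $\p_t v$. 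To obtain $v\in L^\infty(0,T;H^1(\Om))$ I would differentiate the discrete ODE in space, giving $\del v^n-\del v^{n-1}=\D t[\p_u\Psi_2\,\del u^n+\p_v\Psi_2\,\del v^n]$ (justified by the chain rule for the Lipschitz $\Psi_2$ composed with the $H^1$ functions $u^n,v^n$), test with $\del v^n$, bound $|\p_u\Psi_2|\le\Psi_u$, $|\p_v\Psi_2|\le\Psi_v$, and apply discrete Gronwall; the factor $\|\del u^n\|$ is controlled because it only enters through the already-bounded sum $\D t\sum_n\|\del u^n\|^2$. This yields $\max_n\|\del v^n\|\le C$. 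Note that here only the Lipschitz bounds enter, consistent with the remark that monotonicity of $\Psi_j$ is not needed for existence.

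Finally I would pass to the limit $\D t\to 0$ using the piecewise-constant and piecewise-linear-in-time interpolants of $\{u^n\}$, $\{v^n\}$: the estimates bound the $u$-interpolants in $L^\infty(0,T;L^2(\Om))\cap L^2(0,T;H^1_0(\Om))$ with time derivative in $L^2(0,T;H^{-1}(\Om))$ (read off from the equation), so $u$ lies in a bounded set of $\W$, while $v$ is bounded in $L^\infty(0,T;H^1(\Om))$ with $\p_t v$ bounded in $L^\infty(0,T;L^2(\Om))$. By the compact embedding $\W\hookrightarrow L^2(0,T;L^2(\Om))$ and Aubin--Lions I extract subsequences with $u_{\D t}\to u$, $v_{\D t}\to v$ strongly in $L^2(Q)$ (hence a.e.), $\del u_{\D t}\rightharpoonup\del u$ in $L^2(Q)$, and $\del v_{\D t}\rightharpoonup\del v$, $\p_t v_{\D t}\rightharpoonup\p_t v$ weakly-$\ast$. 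The delicate limit is the diffusion term: by continuity of $\Df$ and a.e.\ convergence, $\Df(u_{\D t},v_{\D t})\to\Df(u,v)$ a.e., uniformly bounded by \ref{ass:D}, so a weak-times-bounded-a.e.\ argument gives $\Df(u_{\D t},v_{\D t})\del u_{\D t}\rightharpoonup\Df(u,v)\del u$ in $L^2(Q)$; the terms $\vfl$ and $\Psi_j$ converge strongly by \ref{ass:F}--\ref{ass:Psi}. Passing to the limit recovers $\hyperlink{link:Pw}{(\mathcal{P}w)}$, while the continuous embeddings of $\W$ and of $H^1(0,T;L^2(\Om))$ in $C(0,T;L^2(\Om))$ secure $u(0)=u_0$, $v(0)=v_0$. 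I expect the main obstacle to be the uniform $H^1$ estimate on $v$ combined with the limit passage in $\Df(u,v)\del u$, since the ODE couples to $u$ only through $\del u$, which is controlled merely in $L^2(Q)$ and converges only weakly.
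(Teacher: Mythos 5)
Your proposal is correct in substance and follows the same Rothe skeleton as the paper --- backward Euler in time, uniform energy estimates, time interpolants, compactness of $\W$ in $L^2(Q)$, and a weak-times-strong argument for the diffusion term --- but it diverges in the two key lemmas. First, you discretise fully implicitly ($\Df$ and $\vfl$ evaluated at $(u^n,v^n)$), decouple via the pointwise contraction $v^n=G(u^n)$, and solve the resulting quasilinear elliptic step by Schauder; the paper instead freezes $\Df$ and $\vfl$ at $(u_{n-1},v_{n-1})$, keeping only $\Psi_1,\Psi_2$ implicit, so each step is a Lipschitz perturbation of a linear problem handled by Lax--Milgram plus the Banach fixed point. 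The paper's semi-implicit route buys \emph{uniqueness} of the time-discrete solution and avoids any compactness argument at the discrete level, at the price of carrying two different interpolants ($\check{u}_{\D t}$ in the coefficients, $\hat{u}_{\D t}$ in the gradient) into the limit, reconciled through the bound $\sum_k\|u_k-u_{k-1}\|^2\le C_2(T)$; your energy identity produces the same bound, and you should state it explicitly, since you too need the piecewise-constant and piecewise-linear interpolants to share one limit. Second, for the $L^\infty(0,T;H^1(\Om))$ bound on $v$ you differentiate the discrete ODE in space, invoking a chain rule for $\Psi_2$; but \ref{ass:Psi} grants only Lipschitz continuity, and the exact identity $\del\Psi_2(u^n,v^n)=\p_u\Psi_2\,\del u^n+\p_v\Psi_2\,\del v^n$ is delicate for a Lipschitz function of two Sobolev arguments, since the non-differentiability set of $\Psi_2$ may be charged by $(u^n,v^n)$. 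Your estimate nevertheless survives because it only uses the a.e.\ bound $|\del\Psi_2(u^n,v^n)|\le\Psi_u|\del u^n|+\Psi_v|\del v^n|$, together with the a priori membership $v^n\in H^1(\Om)$ --- which your claim that $G$ maps $H^1(\Om)$ into itself supplies, provided you justify it (e.g.\ via the contraction iterates, whose $H^1$ norms stay uniformly bounded when $\D t\,\Psi_v<1$). The paper's difference-quotient argument (directional quotients $D^h$ on a subdomain $V$, then Theorem 3, Chapter 5.8 of Evans) is designed precisely to sidestep both points: the Lipschitz bound on $|D^h\Psi_{2,n}|$ is immediate, and $v_n\in H^1(\Om)$ emerges as a conclusion rather than a hypothesis. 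With these two repairs made explicit, your argument is complete and yields the same conclusions, including $\p_t v\in L^\infty(0,T;L^2(\Om))$ read off directly from the discrete ODE, exactly as in the paper.
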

To prove this, we apply Rothe's method \cite{kacur1986method}. Let the time $T$ be divided into $N$ time steps of width $\D t$ ($T=N\D t$) and for any $n\in\{1,....,N\}$ let $\p_t w$ be approximated by $(w_n - w_{n-1})/\D t$ for $w\in\{u,v\}$. Here $w_n$ stands for the value of the variable $w$ at time $t_n:=n\D t$. The time-discrete solution is defined as 
\begin{definition}[Time-discrete solution of $\hyperlink{link:Pw}{(\mathcal{P}w)}$]
For a given $n\in\{1,....,N\}$ and $(u_{n-1},v_{n-1})\in (L^2(\Om))^2$, the time-discrete solution of $\hyperlink{link:Pw}{(\mathcal{P}w)}$ at $t=t_n$ is a pair $(u_n,v_n)\in H^1_0(\Om)\times L^2(\Om)$ which satisfies for all $\phi\in H^1_0(\Om)$ and $\xi\in L^2(\Om)$,\label{def:TimeDiscreteSol}
\begin{subequations}\label{Eq:roth}
\begin{numcases}{\hypertarget{link:Pdt}{(\mathcal{P}^n_{\D t})}}
(u_n- u_{n-1},\phi)+\D t\, (\Df(u_{n-1},v_{n-1})\del u_n, \nabla \phi)\nonumber\\
\quad  =\D t\, (\vfl(u_{n-1},v_{n-1}), \nabla \phi)+\D t\, (\Psi_1(u_{n}, v_{n}), \phi);\label{Eq:roth1}\\
(v_n,\xi)=(v_{n-1},\xi)+ \D t\,(\Psi_2(u_{n},v_n),\xi) \label{Eq:roth2}.
\end{numcases}
\end{subequations}
\end{definition}
For the rest of the section the shorthand $\Df_n:=\Df(u_{n},v_{n})$, $\vfl_{n}:=\vfl(u_{n},v_{n})$ and $\Psi_{j,n}:=\Psi_j(u_n,v_n)$ will be used extensively for $n\in \{1,\dots,N\}$ and $j\in\{1,2\}$. 
We show first that the pair $(u_n,v_n)$  exists.
\begin{lemma}
Assume \ref{ass:D}--\ref{ass:Psi}. Then, there exists $\D t^*>0$ such that if $0<\D t<\D t^*$, a unique pair $(u_n,v_n)$ solving $\hyperlink{link:Pdt}{(\mathcal{P}^n_{\D t})}$ in terms of \Cref{def:TimeDiscreteSol} exists.
\end{lemma}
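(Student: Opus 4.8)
The plan is to decouple $\hyperlink{link:Pdt}{(\mathcal{P}^n_{\D t})}$: first solve the pointwise relation \eqref{Eq:roth2} for $v_n$ as a function of $u_n$, then substitute into \eqref{Eq:roth1} to obtain a single nonlinear elliptic problem for $u_n$, which I solve by Banach's fixed point theorem. The data $\Df_{n-1}:=\Df(u_{n-1},v_{n-1})$ and $\vfl_{n-1}:=\vfl(u_{n-1},v_{n-1})$ are frozen, known $L^\infty(\Om)$ functions, so the only genuine nonlinearity is the implicit, two-way coupling through $\Psi_1(u_n,v_n)$ and $\Psi_2(u_n,v_n)$.

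First I would handle \eqref{Eq:roth2}. Since it holds for all $\xi\in L^2(\Om)$, it is equivalent to the a.e.\ pointwise identity $v_n=v_{n-1}+\D t\,\Psi_2(u_n,v_n)$. For fixed $u_n\in L^2(\Om)$ the map $v\mapsto v_{n-1}+\D t\,\Psi_2(u_n,v)$ is, by the Lipschitz bound in \ref{ass:Psi}, a contraction in $v$ whenever $\D t\,\Psi_v<1$, so it has a unique fixed point, which I denote $v_n=\mathcal{G}(u_n)$. A Carath\'eodory/measurable-selection argument shows $\mathcal{G}(u_n)$ is measurable, and the linear growth of $\Psi_2$ (a consequence of the Lipschitz bound) together with $v_{n-1},u_n\in L^2(\Om)$ gives $\mathcal{G}(u_n)\in L^2(\Om)$. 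Subtracting the defining identities for two inputs and again using \ref{ass:Psi} yields the Lipschitz estimate $\|\mathcal{G}(u)-\mathcal{G}(\tilde u)\|\le \frac{\D t\,\Psi_u}{1-\D t\,\Psi_v}\,\|u-\tilde u\|$, so $\mathcal{G}\colon L^2(\Om)\to L^2(\Om)$ has a Lipschitz constant of order $\D t$.

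Substituting $v_n=\mathcal{G}(u_n)$ into \eqref{Eq:roth1} reduces the problem to finding $u_n\in H^1_0(\Om)$ with $(u_n,\phi)+\D t(\Df_{n-1}\del u_n,\del\phi)=(u_{n-1},\phi)+\D t(\vfl_{n-1},\del\phi)+\D t(\Psi_1(u_n,\mathcal{G}(u_n)),\phi)$ for all $\phi\in H^1_0(\Om)$. For a frozen iterate $w\in L^2(\Om)$ I define $\mathcal{T}w\in H^1_0(\Om)$ as the unique solution, given by Lax--Milgram, of the corresponding linear problem carrying $\Psi_1(w,\mathcal{G}(w))$ on the right-hand side: the bilinear form $(u,\phi)+\D t(\Df_{n-1}\del u,\del\phi)$ is bounded and coercive on $H^1_0(\Om)$ by \ref{ass:D} (since $\Df_m\le \Df_{n-1}\le\Df_M$ and the $(u,\phi)$ term provides control of $\|u\|$), while the right-hand side is a bounded functional because $\Psi_1(w,\mathcal{G}(w))\in L^2(\Om)$ by the linear growth of $\Psi_1$. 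Testing the difference of two such problems with $\mathcal{T}w_1-\mathcal{T}w_2$, discarding the nonnegative gradient term, and combining the Lipschitz bounds on $\Psi_1$ (from \ref{ass:Psi}) and on $\mathcal{G}$ from the previous step gives $\|\mathcal{T}w_1-\mathcal{T}w_2\|\le \D t\bigl(\Psi_u+\Psi_v\,\tfrac{\D t\,\Psi_u}{1-\D t\,\Psi_v}\bigr)\|w_1-w_2\|$.

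Choosing $\D t^*>0$ small enough that, for all $0<\D t<\D t^*$, both $\D t\,\Psi_v<1$ and the displayed constant is strictly less than $1$, the map $\mathcal{T}$ is a contraction on $L^2(\Om)$ and has a unique fixed point $u_n$; as the range of $\mathcal{T}$ lies in $H^1_0(\Om)$, automatically $u_n\in H^1_0(\Om)$. Setting $v_n:=\mathcal{G}(u_n)$ then produces a solution of $\hyperlink{link:Pdt}{(\mathcal{P}^n_{\D t})}$. Uniqueness is immediate: any solution must satisfy $v_n=\mathcal{G}(u_n)$ by the pointwise uniqueness in the first step and $u_n=\mathcal{T}u_n$ by uniqueness of the fixed point. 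The main obstacle is the careful bookkeeping of the composed Lipschitz constants so that $\mathcal{T}$ is genuinely a contraction of order $\D t$ (and the selection of a single $\D t^*$ serving both smallness requirements); the measurability and $L^2$-membership of the pointwise-defined $\mathcal{G}(u_n)$ and the Lax--Milgram step are otherwise routine.
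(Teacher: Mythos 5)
Your proof is correct, but it takes a genuinely different route from the paper. The paper applies the Banach fixed point theorem \emph{once, on the pair}: it defines $\mathcal{B}(\tilde u,\tilde v)=(u^*,v^*)$ by freezing $\Psi_1(\tilde u,\tilde v)$ and $\Psi_2(\tilde u,\tilde v)$ in \emph{both} equations (so the $v^*$-update is an explicit linear identity and the $u^*$-equation is solved by Lax--Milgram), and then shows $\mathcal{B}$ is contractive on $H^1_0(\Om)\times L^2(\Om)$ with respect to the norm $\bigl(\|u\|^2+2\D t\,\Df_m\|\del u\|^2+\|v\|^2\bigr)^{1/2}$, with contraction constant of order $\D t$. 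You instead eliminate $v_n$ first: since \eqref{Eq:roth2} is equivalent to the a.e.\ identity $v_n=v_{n-1}+\D t\,\Psi_2(u_n,v_n)$, the scalar map is a pointwise contraction for $\D t\,\Psi_v<1$, yielding $v_n=\mathcal{G}(u_n)$ with $\|\mathcal{G}(u)-\mathcal{G}(\tilde u)\|\le \tfrac{\D t\,\Psi_u}{1-\D t\,\Psi_v}\|u-\tilde u\|$, after which a single-variable contraction $\mathcal{T}$ on $L^2(\Om)$ handles the reduced elliptic problem. Both arguments hinge on the same two ingredients (Lax--Milgram plus a contraction whose constant is $O(\D t)$, uniformly in $n$ and in $(u_{n-1},v_{n-1})$), and your composed constant $\D t\bigl(\Psi_u+\Psi_v\tfrac{\D t\,\Psi_u}{1-\D t\,\Psi_v}\bigr)$ is verified correctly, as is the fixed point's membership in $H^1_0(\Om)$ via the range of $\mathcal{T}$. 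What the paper's joint formulation buys is lighter bookkeeping: no nested Lipschitz constants, no measurability discussion for the pointwise-defined $\mathcal{G}$ (which your Picard-iteration remark does settle, since the iterates are measurable and converge pointwise). What your nested formulation buys is the exact pointwise solvability of the ODE step, i.e.\ $v_n=v_{n-1}+\D t\,\Psi_{2,n}$ a.e.\ with $v_n$ uniquely determined by $u_n$ --- a structural fact the paper itself exploits later (Step 2 of \Cref{Lemma:ExistenceW} uses precisely this identity for the difference-quotient estimates), and it makes the uniqueness of the pair an immediate two-line consequence of the two fixed-point uniqueness statements rather than a property of the product-space iteration.
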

\begin{proof}
We define the operator $\mathcal{B}: (L^2(\Om))^2\to H^1_0(\Om)\times L^2(\Om)$ as follows: $\mathcal{B}(\tilde{u},\tilde{v})=(u^*,v^*)$, with $(u^*,v^*)$ solving
\begin{subequations}\label{Eq:LemmaBfpt}
\begin{align}
&(u^*,\phi)+\D t\, (\Df_{n-1}\nabla u^*-\vfl_{n-1}, \nabla \phi)=\D t\,(\Psi_1(\tilde{u},\tilde{v}), \phi)+(u_{n-1},\phi),\label{Eq:LemmaBfpt1} \\
&(v^*,\xi)=(v_{n-1},\xi)+ \D t\, (\Psi_2(\tilde{u},\tilde{v}),\xi),\label{Eq:LemmaBfpt2}
\end{align}
\end{subequations}
for any $\phi\in H^1_0(\Om)$ and $\xi\in L^2(\Om)$.  It follows from Lax-Milgram theorem \cite[Chapter 2]{zeidler1995applied} that $\mathcal{B}$ is well-defined. Next, we  apply the Banach fixed point theorem. For two pairs $(\tilde{u}_1,\tilde{v}_1)$ and $(\tilde{u}_2,\tilde{v}_2)$ in $H^1_0(\Om)\times L^2(\Om)$, let the outputs of $\mathcal{B}$ be $(u^*_1,v^*_1)$ and $(u^*_2,v^*_2)$. Subtracting the two versions of \eqref{Eq:LemmaBfpt1}, defining $e^*_u=u^*_1-u^*_2$, substituting $\phi=e^*_u$ and using Young's inequality we get
\begin{align}
&\norm{e^*_u}^2 + \D t \Df_m \norm{\del e^*_u}^2\leq (e^*_u,e^*_u)+ \D t\, (\Df_{n-1} \del e^*_u,\del e^*_u)= \D t\, (\Psi_1(\tilde{u}_1,\tilde{v}_1)-\Psi_1(\tilde{u}_2,\tilde{v}_2), e^*_u)\nonumber\\
&\leq \dfrac{\D t^2}{2}\norm{\Psi_1(\tilde{u}_1,\tilde{v}_1)-\Psi_1(\tilde{u}_2,\tilde{v}_2)}^2 + \dfrac{1}{2}\norm{e^*_u}^2\leq C\D t^2[\|\tilde{u}_1-\tilde{u}_2\|^2+ \|\tilde{v}_1-\tilde{v}_2\|^2] + \dfrac{1}{2}\norm{e^*_u}^2,\nonumber
\end{align}
for some $C>0$, the last inequality following from \ref{ass:Psi}. This implies,
\begin{align}
\frac{1}{2}\norm{e^*_u}^2+ \D t\, \Df_m \|\del e^*_u\|^2 <C\D t^2[\norm{\tilde{u}_1-\tilde{u}_2}^2 + \norm{\tilde{v}_1-\tilde{v}_2}^2].\label{Eq:LemmaUnexists}
\end{align}
Similarly defining $e^*_v=v^*_1-v^*_2$ we get from \eqref{Eq:LemmaBfpt2}
\begin{align}
\frac{1}{2}\norm{e^*_v}^2<C\D t^2[\norm{\tilde{u}_1-\tilde{u}_2}^2 + \norm{\tilde{v}_1-\tilde{v}_2}^2].\label{Eq:LemmaVnexists}
\end{align}
This clearly shows that $\mathcal{B}$ defines a contraction in $H^1_0(\Om)\times L^2(\Om)$ for $\D t$ small enough. More precisely, since $\|u\|_{\D t}:=\sqrt{\|u\|^2 + 2\D t\, \Df_m \|\del u\|^2}$ is an equivalent norm of $\|u\|_{H^1_0}$, we observe following  \eqref{Eq:LemmaUnexists} and \eqref{Eq:LemmaVnexists} that $\mathcal{B}(u,v)$ is contractive with respect to the norm $\sqrt{\|u\|_{\D t}^2 + \|v\|^2}$ for small $\D t$.
Hence, a fixed point $(u_n,v_n)$ of $\mathcal{B}$ exists in $H^1_0(\Om)\times L^2(\Om)$, i.e.,   $\mathcal{B}(u_n,v_n)=(u_n,v_n)$. This proves the lemma. We remark that the condition on $\D t$ is moderate, i.e. $(u_n,v_n)$  exists if $0<\D t\leq C$ where the constant $C>0$ depends neither on $n$ nor on $(u_{n-1},v_{n-1})$. 
\end{proof}
From now on we assume that $\D t$ is small enough which  guarantees the existence of solution pairs to the time discrete problems $\hyperlink{link:Pdt}{(\mathcal{P}^n_{\D t})}$.
Our goal will be to construct the solution $(u,v)$ from the time-discrete solutions. 
For this purpose, we introduce the following interpolation functions:
for $w\in \{u,v\}$ and $t\in (0,T]$ the piece-wise constant interpolations $\hat{w}_{\D t},\, \check{w}_{\D t}$ and the linear interpolation $\bar{w}_{\D t}$ are defined in $Q$ so that for $t\in(t_{n-1}, t_n]$ (recall that $t_n=n\D t$),
 \begin{align}
&\hat{w}_{\D t}(t)=w_n, \; \check{w}_{\D t}(t)=w_{n-1},\; \bar{w}_{\D t}(t)=w_{n-1} +\dfrac{t-t_{n-1}}{\D t} (w_n - w_{n-1}).\label{eq:Interpolation}
\end{align}

As a first step we show that $\bar{u}_{\D t}$ and $\bar{v}_{\D t}-v_0$ are bounded uniformly in $\W$ and then we would use embedding theorems to construct the weak solution.
\begin{lemma}
Let $(u_n,v_n)$ be the time-discrete solutions of $\hyperlink{link:Pdt}{(\mathcal{P}^n_{\D t})}$ in terms of \Cref{def:TimeDiscreteSol} for all $n\in\{1,\dots,N\}$ and let $\hat{u}_{\D t},\, \check{u}_{\D t},\, \bar{u}_{\D t}$ and $\hat{v}_{\D t},\, \check{v}_{\D t},\, \bar{v}_{\D t}$ be the interpolations defined in \eqref{eq:Interpolation}. Then, $\hat{u}_{\D t},\, \check{u}_{\D t},\, \bar{u}_{\D t} \in L^2(0,T;H^1_0(\Om))$ and $\p_t \bar{u}_{\D t}\in L^2(0,T;H^{-1}(\Om))$ and the corresponding norms are bounded uniformly with respect to ${\D t}$. Similarly, the bounds of $\hat{v}_{\D t},\, \check{v}_{\D t},\, \bar{v}_{\D t} \in L^\infty(0,T;H^1(\Om))$ and $\p_t \bar{v}_{\D t}\in L^\infty(0,T;L^2(\Om))$ are uniform.
\label{Lemma:ExistenceW}
\end{lemma}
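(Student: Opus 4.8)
The plan is to derive uniform-in-$\D t$ a priori bounds on the discrete iterates $(u_n,v_n)$ and then transfer them to the three interpolants $\hat w_{\D t},\check w_{\D t},\bar w_{\D t}$, $w\in\{u,v\}$, through \eqref{eq:Interpolation}. For the \emph{basic energy estimate} I would test \eqref{Eq:roth1} with $\phi=u_n$ and \eqref{Eq:roth2} with $\xi=v_n$. Using the identity $(w_n-w_{n-1},w_n)=\tfrac12(\norm{w_n}^2-\norm{w_{n-1}}^2+\norm{w_n-w_{n-1}}^2)$, the coercivity $\Df\geq\Df_m$ from \ref{ass:D}, the bound $|\vfl|\le F_M$ from \ref{ass:F}, the Lipschitz estimate $|\Psi_j(u_n,v_n)|\le|\Psi_j(0,0)|+\Psi_u|u_n|+\Psi_v|v_n|$ from \ref{ass:Psi}, and Young's inequality to absorb part of $\D t\,\Df_m\norm{\del u_n}^2$ and close on $\norm{u_n}^2,\norm{v_n}^2$, one obtains for $y_n:=\norm{u_n}^2+\norm{v_n}^2$ an inequality of the form $y_n-y_{n-1}+\D t\,\Df_m\norm{\del u_n}^2\le C\D t\,(y_n+1)$. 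For $\D t$ small the discrete Gronwall lemma yields $\max_n y_n\le C$ and, after summation, $\sum_n\D t\,\norm{\del u_n}^2\le C$, both independent of $\D t$. This already gives the uniform $L^2(0,T;H^1_0(\Om))$ bounds for $\hat u_{\D t},\check u_{\D t},\bar u_{\D t}$ (the linear interpolant being a convex combination of consecutive iterates) and the $L^\infty(0,T;L^2(\Om))$ bounds for the $v$-interpolants.

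The crux is the \emph{spatial $H^1$ bound on $v$}. Since \eqref{Eq:roth2} is merely an ODE in time carrying no diffusion, the spatial regularity of $v$ must be inherited from $u$ through the coupling in $\Psi_2$. I would use Nirenberg's translation method: for $h\in\R^d$ write $\tau_h w(x)=w(x+h)$ and $D_h w=\tau_h w-w$, apply $\tau_h$ to the pointwise relation $v_n-v_{n-1}=\D t\,\Psi_2(u_n,v_n)$, subtract the original, and test against $D_h v_n$. The left-hand side produces $\tfrac12(\norm{D_h v_n}^2-\norm{D_h v_{n-1}}^2)$ up to a dropped nonnegative term, while the Lipschitz bound \ref{ass:Psi} controls the right-hand side by $\D t\,\Psi_u\norm{D_h u_n}^2+C\D t\,\norm{D_h v_n}^2$ after Young. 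Dividing by $|h|^2$ and letting $h\to0$ converts difference quotients into gradients, giving $\norm{\del v_n}^2-\norm{\del v_{n-1}}^2\le\D t\,\Psi_u\norm{\del u_n}^2+C\D t\,\norm{\del v_n}^2$. Summing and invoking the already-established $\sum_n\D t\,\norm{\del u_n}^2\le C$ together with $v_0\in H^1(\Om)$ from \ref{ass:Ini}, the discrete Gronwall lemma yields $\max_n\norm{\del v_n}^2\le C$, hence the uniform $L^\infty(0,T;H^1(\Om))$ bound for the $v$-interpolants. Note that only the Lipschitz part of \ref{ass:Psi} is used here, consistent with the preceding remark.

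It remains to bound the \emph{discrete time derivatives}. For $v$, observe $\p_t\bar v_{\D t}=(v_n-v_{n-1})/\D t=\Psi_2(u_n,v_n)$ on $(t_{n-1},t_n]$, which by \ref{ass:Psi} and $\max_n y_n\le C$ is bounded in $L^2(\Om)$ uniformly in $t$ and $\D t$; this is exactly the claimed $L^\infty(0,T;L^2(\Om))$ bound. For $u$, rewriting \eqref{Eq:roth1} as $\langle\p_t\bar u_{\D t},\phi\rangle=-(\Df_{n-1}\del u_n,\del\phi)+(\vfl_{n-1},\del\phi)+(\Psi_{1,n},\phi)$ and estimating each term with \ref{ass:D}, \ref{ass:F}, Poincar\'e's inequality, and \ref{ass:Psi} gives, on each subinterval, $\norm{\p_t\bar u_{\D t}}_{H^{-1}}\le\Df_M\norm{\del u_n}+F_M|\Om|^{1/2}+C_\Om\norm{\Psi_{1,n}}$; squaring, multiplying by $\D t$, and summing, the right-hand side is controlled by $\sum_n\D t\,\norm{\del u_n}^2$ and $\max_n y_n$, yielding the uniform $L^2(0,T;H^{-1}(\Om))$ bound.

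I expect Step~2 to be the main obstacle: because $v$ possesses no spatial dissipation of its own, its $H^1$-regularity cannot be obtained from a direct energy test and must be transported from $u$ via the coupling. The translation/difference-quotient argument, combined with the summability $\sum_n\D t\,\norm{\del u_n}^2\le C$ from Step~1 and the Lipschitz continuity of $\Psi_2$, is precisely what makes the discrete Gronwall estimate close; some care with translations near $\p\Om$ is also required, but this is standard.
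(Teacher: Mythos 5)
Your proposal follows the paper's proof essentially step for step: the same energy estimate (testing with $\phi=u_n$ and $\xi=v_n$, the telescoping identity, coercivity of $\Df$, the Lipschitz bound on $\Psi_j$, Young, discrete Gronwall, yielding $\max_n(\|u_n\|^2+\|v_n\|^2)\leq C$ and $\sum_n \D t\,\|\del u_n\|^2\leq C$), the same translation/difference-quotient argument to transport spatial regularity from $u_n$ to $v_n$ through the pointwise relation $v_n=v_{n-1}+\D t\,\Psi_2(u_n,v_n)$, and the same direct estimates for the time derivatives ($\p_t\bar v_{\D t}=\Psi_2(u_n,v_n)$ bounded in $L^2(\Om)$; the $H^{-1}$ duality bound for $\p_t\bar u_{\D t}$ via $\Df_M$, $F_M$ and Poincar\'e). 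You also correctly identified Step 2 as the crux and correctly noted that only the Lipschitz part of \ref{ass:Psi} is needed.

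The one point that needs repair is the order of limits in your Step 2. You pass $h\to 0$ at fixed $n$, replacing $\|D_h v_n\|^2/|h|^2$ by $\|\del v_n\|^2$, and only afterwards sum over $n$ and apply discrete Gronwall. But $v_n\in H^1(\Om)$ is precisely what is being proven: at that stage $v_n$ is only known to lie in $L^2(\Om)$, so its difference quotients need not converge to a gradient, and the step is circular as written. The correct (and the paper's) ordering is the reverse: keep $h$ fixed, sum the difference-quotient inequality over $n=1,\dots,P$, bound $\sum_k \D t \smallint_V |D^h u_k|^2 \leq C \sum_k \D t\,\|\del u_k\|^2$ uniformly in $h$ --- legitimate because $u_k\in H^1_0(\Om)$ is already known (Evans, Thm.~3, Ch.~5.8) --- and apply discrete Gronwall to obtain $\sup_n \smallint_V |D^h v_n|^2 \leq C$ uniformly in $h$ and in the interior set $V$ with $\mathrm{dist}(V,\p\Om)>2h$; only then invoke the converse difference-quotient characterization of $H^1$ (together with the $C^1$ boundary, to extend $V$ to $\Om$) to conclude $\|\del v_n\|\leq C$. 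All ingredients for this fix are already present in your argument; it is purely a reordering of quantifiers plus the boundary bookkeeping you flagged as standard.
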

\begin{proof}
\emph{(Step 1)} The fact that the functions $\hat{u}_{\D t},\, \check{u}_{\D t},\, \bar{u}_{\D t}$ belong to $L^2(0,T;H^1_0(\Om))$ is direct. We proceed by showing their uniform boundedness. Putting $\phi=u_n$ in \eqref{Eq:roth1} and $\xi=v_n$ in \eqref{Eq:roth2} we get,
\begin{align*}
&(\|u_n\|^2 -\|u_{n-1}\|^2 +\|u_n - u_{n-1}\|^2)+{\D t}\, \Df_m \|\nabla u_n\|^2\leq C{\D t} [1  +\|u_n\|^2 +\|v_n\|^2],\\
&(\|v_n\|^2 -\|v_{n-1}\|^2 +\|v_n - v_{n-1}\|^2)\leq C{\D t} [ 1 +\|u_n\|^2 +\|v_n\|^2],
\end{align*}
for some $C>0$.
Here, the identity $(a-b,b)=\frac{1}{2}(\norm{a}^2 - \norm{b}^2)+\frac{1}{2}\norm{a-b}^2$ has been invoked and $(\vfl_{n-1},\del u_n)\leq \frac{1}{2\Df_m}\|\vfl_{n-1}\|^2 + \frac{\Df_m}{2}\|\del u_n\|^2$ is used.
Combining both inequalities and summing the results up from $n=1$ to $P\leq N$ yields
\begin{align*}
&\|u_P\|^2 + \|v_P\|^2 +\sum_{k=1}^{P}(\|u_k-u_{k-1}\|^2 + \|v_k - v_{k-1}\|^2) + \Df_m \sum_{k=1}^{P}\|\del u_k\|^2 {\D t} \\
&\leq (\|u_0\|^2 + \|v_0\|^2) + 2 C P{\D t} + 2 C {\D t} \sum_{k=0}^{P}(\|u_k\|^2 + \|v_k\|^2).
\end{align*}
With $C_0=\|u_0\|^2 + \|v_0\|^2$ and a generic constant $C>0$, the discrete Gronwall's lemma is applied to get: 
\begin{equation}
 \|u_P\|^2 + \|v_P\|^2\leq \left( C_0 +C P\Delta t \right)\exp \{2 C P\Delta t \} \leq  ( C_0 + C T )\exp \{2CT \}.\label{Eq:ExistenceIneqLinfL2}
\end{equation}
Further, it gives two other important bounds both of which are used later, i.e.
\begin{align}
\sum_{k=1}^{N}\|\del u_k\|^2 {\D t}< C_1(T),\quad &\sum_{k=1}^{N}(\|u_k-u_{k-1}\|^2  + \|v_k - v_{k-1}\|^2) < C_2(T)\label{Eq:ExistenceImpIneq},
\end{align}
with $C_{1/2}(T)>0$ being independent of $N$. This directly gives the bounds of $\hat{u}_{\D t},\, \check{u}_{\D t},\, \bar{u}_{\D t}$ in $L^2(0,T;H^1_0(\Om))$ since, for example,
\begin{align*}
&\int_{0}^{T} \|\hat{u}_{\D t}\|_{H^1_0(\Om)}^2 dt=\sum_{k=1}^{N} \|\del u_k\|^2 {\D t} \leq C_1(T),
\end{align*}
with the rest of the bounds following accordingly.

\emph{(Step 2)} We need to show that $\hat{v}_{\D t},\, \check{v}_{\D t},\, \bar{v}_{\D t} \in L^\infty(0,T;H^1(\Om))$. So far we have from \eqref{Eq:ExistenceIneqLinfL2} that $\hat{v}_{\D t}, \check{v}_{\D t}, \bar{v}_{\D t} \in L^\infty(0,T;L^2(\Om))$. Since \Cref{def:TimeDiscreteSol}  does not explicitly involve any spatial derivatives of $v_n$, in order to prove its spatial regularity, we use directional derivatives: $D^h w=(w(\bm{x}+\hat{\bm{e}}h)-w(\bm{x}))/h$ for $\bm{x}\in \Om$ and an arbitrary unit vector $\hat{\bm{e}}\in \R^d$. Choose an open subset $V\subset \Om$ such that $\mathrm{dist}(V,\p\Om)>2h>0$.  Since $v_n$, $v_{n-1}\in L^2(\Om)$, we have from \eqref{Eq:roth2} that
$$
v_n=v_{n-1} +\D t\, \Psi_{2,n} \text{ a.e. in } \Om,\;\text{ or } D^h v_n = D^h v_{n-1} + \D t\, D^h \Psi_{2,n} \text{ a.e. in } V.
$$
Then multiplying both sides by $D^h v_n$ and integrating over $V$ we get
\begin{align*}
& \int_V (D^h v_n - D^h v_{n-1}) D^h v_n={\D t} \int_V D^h \Psi_{2,n} D^h v_n\leq {\D t} C [\int_V |D^h u_n|^2 +  \int_V |D^h v_n|^2],
\end{align*}
where $C>0$ does not depend on $n$ or $\D t$. After summing from $n=1$ to $P$, where $P\leq N$ is chosen arbitrarily, and using $\sum_{k=1}^{P}\smallint_V|D^h u_k|^2 {\D t}\leq \bar{C}_3 \sum_{k=1}^{P} \|\del u_k\|^2 {\D t} \leq  \bar{C}_3 C_1(T) =:C_3 $ for some $\bar{C}_3>0$ (see Theorem 3, Chapter 5.8 of \cite{evans1988partial}) we have
\begin{align*}
&\int_V |D^h v_P|^2 -  \int_V|D^h v_0|^2 + \sum_{k=1}^{P}  \int_V|D^h v_k - D^h v_{k-1}|^2 \leq C_3 + 2 C  \sum_{k=1}^{P}   \int_V|D^h v_k|^2 {\D t}.
\end{align*}
With the application of discrete Gronwall's lemma one obtains that $\smallint_V |D^h v_n|^2$ is bounded independent of $V$ and $h$. The smoothness of the boundary $\p\Om$ further implies that we can extend $V$ to $\Om$ and hence by applying  Theorem 3, Chapter 5.8 of \cite{evans1988partial} we get that $\|\del v_n\|$ is bounded. Consequently, $\hat{v}_{\D t}, \check{v}_{\D t}, \bar{v}_{\D t}\in L^\infty(0,T;H^1(\Om))$.

\emph{(Step 3)}
Finally, we prove the regularity of time derivatives of $\bar{u}_{\D t}$ and $\bar{v}_{\D t}$. Observe that, as $\hat{u}_{\D t},\hat{v}_{\D t}\in L^\infty(0,T;L^2(\Om))$, $\Psi_2(\hat{u}_{\D t},\hat{v}_{\D t})\in L^\infty(0,T;L^2(\Om))$ which gives from \eqref{Eq:roth2},
\begin{align*}
\|\p_t \bar{v}_{\D t}\|= \frac{1}{{\D t}}\|v_n-v_{n-1}\|=\|\Psi_{2,n}\|=\|\Psi_2(\hat{u}_{\D t},\hat{v}_{\D t})\|\leq C_4,
\end{align*}
for some $C_4>0$ and $t\in (t_{n-1},t_n]$.
For $\p_t \bar{u}_{\D t}$, from \eqref{Eq:roth1} one has
\begin{align*}
&\|\p_t \bar{u}_{\D t}\|_{H^{-1}(\Om)}=\sup_{\|\phi\|_{H^1_0(\Om)}=1} \langle\p_t \bar{u}_{\D t},\phi\rangle=  \sup_{\|\phi\|_{H^1_0(\Om)}=1} \left(\dfrac{u_n-u_{n-1}}{{\D t}},\phi\right)\\
&\leq \Df_M \|\del u_n\| + \|\vfl_{n-1}\| + {\|\phi\|}\|\Psi_{1,n}\|=\Df_M \|\del \hat{u}_{\D t}\| + \|F_M\| + C_\Om\|\Psi_1(\hat{u}_{\D t},\hat{v}_{\D t})\|.
\end{align*} 
Here, the Poincar\'e inequality $\|\phi\|\leq C_\Om\|\del \phi\|$ has been used. 
Since $\hat{u}_{\D t}\in L^2(0,T;H^1_0(\Om))$ and $\Psi_1(\hat{u}_{\D t},\hat{v}_{\D t}) \in L^2(Q)$, we have that $\p_t \bar{u}_{\D t}\in L^2(0,T;H^{-1}(\Om))$.
\end{proof}

\begin{proof}[\textbf{Proof of \Cref{Theo:Existence}}]
\Cref{Lemma:ExistenceW} shows that $\bar{u}_{\D t}$ and $(\bar{v}_{\D t}- v_0)$ are bounded in $\W$ uniformly with respect to ${\D t}$. Hence, there exists a sequence of time-steps  $\{{\D t}_p\}_{p\in\N}$ with $\lim {\D t}_p=0$ such that
\begin{align}
\bar{u}_{{\D t}_p}\rightharpoonup u \text{ and } (\bar{v}_{{\D t}_p}-v_0)\rightharpoonup (v -v_0) \text{ weakly in } \W.\label{Eq:wweak}
\end{align}
Due to the compact embedding of $\W$ in $L^2(Q)$ this further implies that,
\begin{align*}
\bar{w}_{{\D t}_p}\rightarrow w, \text{ strongly in } L^2(Q) \text{ for } w\in \{u,v\}.
\end{align*}
From \eqref{Eq:ExistenceImpIneq} the following inequalities are obtained,
\begin{align*}
&\int_{0}^{T}\|\hat{u}_{\D t}- \bar{u}_{\D t}\|^2 dt  =\sum_{k=1}^{N}\int_{t_{k-1}}^{t_k}\left\|\dfrac{t_k-t}{{\D t}}(u_k-u_{k-1})\right\|^2 dt=\dfrac{{\D t}}{3}  \sum_{k=1}^{N}\|u_k-u_{k-1}\|^2 \leq \dfrac{C_2}{3}{\D t},\\
&\int_{0}^{T}\|\check{u}_{\D t}- \hat{u}_{\D t}\|^2 dt  =\sum_{k=1}^{N}\int_{t_{k-1}}^{t_k}\|(u_k-u_{k-1})\|^2 dt={\D t} \sum_{k=1}^{N}\|u_k-u_{k-1}\|^2 \leq {C}_2{\D t}.
\end{align*}
This shows that  $\hat{u}_{{\D t}_p}\to u$ in $L^2(Q)$ as $\D t_p \to 0$ since $\|u- \hat{u}_{{\D t}_p}\|\leq \|u- \bar{u}_{{\D t}_p}\|+ \|\bar{u}_{{\D t}_p}-\hat{u}_{{\D t}_p}\|$. By repeating this argument, one shows that the same holds for $\check{u}_{{\D t}_p}$. Hence,
\begin{align}
\bar{u}_{{\D t}_p},\check{u}_{{\D t}_p},\hat{u}_{{\D t}_p}\rightarrow u, \text{ strongly in } L^2(Q).\label{Eq:ustrong}
\end{align}
In an identical way, for $v_{\D t}$ there exists a subsequence ${\D t}_q\to 0$ such that
\begin{align}
\bar{v}_{{\D t}_q},\check{v}_{{\D t}_q},\hat{v}_{{\D t}_q}\rightarrow v, \text{ strongly in } L^2(Q).\label{Eq:vstrong}
\end{align}
Finally, due to the bounds of $\p_t\bar{u}_{\D t}$ and $\p_t\bar{v}_{\D t}$ given in \Cref{Lemma:ExistenceW}, there exists a sequence ${\D t}_m\to 0$ such that,
\begin{align}
\partial_t \bar{w}_{{\D t}_m} \rightharpoonup \partial_t w, \text{ weakly in } L^2(0,T;H^{-1}(\Om)) \text{ for } w\in\{u,v\}. \label{Eq:timeweak}
\end{align} 
We claim that $(u,v)$ solves $\hyperlink{link:Pw}{(\mathcal{P}w)}$. Let ${\D t}_r \to 0$ be a sequence that satisfies the limits \eqref{Eq:wweak}--\eqref{Eq:timeweak}. From \eqref{Eq:roth} we have for $\phi\in L^2(0,T;H^1_0(\Om))$ and $\xi\in L^2(Q)$,
 \begin{align*}
 &\smallint^T_0\langle\partial_t \bar{u}_{{\D t}_r}, \phi\rangle+ \smallint^T_0(\Df(\check{u}_{{\D t}_r}, \check{v}_{{\D t}_r})\nabla \hat{u}_{{\D t}_r}, \nabla \phi)=\smallint^T_0(\Psi_1(\hat{u}_{{\D t}_r}, \hat{v}_{{\D t}_r}),\phi) + \smallint^T_0(\vfl( \check{u}_{{\D t}_r}, \check{v}_{{\D t}_r}),\del \phi),\\
 &\smallint^T_0\langle \partial_t \bar{v}_{{\D t}_r}, \xi\rangle=\smallint^T_0(\Psi_2(\hat{u}_{{\D t}_r}, \hat{v}_{{\D t}_r}), \xi).
 \end{align*}
 Since $\partial_t \bar{v}_{{\D t}_r}\rightharpoonup \partial_t v\in L^2(\Om)$, the second equation directly gives \eqref{Eq:weak2} in the limit.
From \eqref{Eq:timeweak}, $ \smallint_{0}^{T}\langle\partial_t \bar{u}_{{\D t}_r}, \phi\rangle \to \smallint_{0}^{T}\langle\partial_t u, \phi\rangle$ and \eqref{Eq:ustrong}--\eqref{Eq:vstrong} gives  $ \smallint_{0}^{T}(\Psi_1(\hat{u}_{{\D t}_r}, \hat{v}_{{\D t}_r}), \phi)\to \smallint_{0}^{T}(\Psi_1(u,v), \phi)$ and $\smallint^T_0(\vfl(\check{u}_{{\D t}_r}, \check{v}_{{\D t}_r}),\del\phi) \to \smallint^T_0(\vfl(u,v),\del\phi)$. Convergence of the second term $(\Df(\check{u}_{{\D t}_r}, \check{v}_{{\D t}_r})\nabla \hat{u}_{{\D t}_r}, \nabla \phi)$ remains to be shown. For this, we first observe that $\Df(\check{u}_{{\D t}_r}, \check{v}_{{\D t}_r})\del \hat{u}_{{\D t}_r}$ is bounded in $L^2(Q)$ uniformly with respect to $\D t$. This means that a $\bm{\zeta}\in L^2(0,T; L^2(\Om)^d)$ exists such that $\Df(\check{u}_{{\D t}_r}, \check{v}_{{\D t}_r})\del \hat{u}_{{\D t}_r} \rightharpoonup \bm{\zeta}$ weakly. To prove that $\bm{\zeta}=\Df(u,v)\del u$, we restrict the test function to $\phi\in  C^\infty_0(Q)$. Using the strong convergence of $\Df(\check{u}_{{\D t}_r}, \check{v}_{{\D t}_r})$ and the weak convergence of $\del \hat{u}_{{\D t}_r}$ one gets with $C_\phi=\|\phi\|_{C^1}$ that 

 \begin{align*}
 &\left|\smallint_{0}^{T} (\Df(\check{u}_{{\D t}_r}, \check{v}_{{\D t}_r})\del \hat{u}_{{\D t}_r}-\Df(u,v)\del u, \del \phi)\right|
\\
& \leq \left | \smallint_{0}^{T}\big((\Df(\check{u}_{{\D t}_r}, \check{v}_{{\D t}_r})-\Df(u,v))\del \hat{u}_{{\D t}_r}, \del \phi\big)\right | + \left |\smallint_{0}^{T}(\Df(u,v)\del (\hat{u}_{{\D t}_r}-u), \del \phi)\right |\\
&\leq C_\phi \smallint_{0}^{T} | (\Df(\check{u}_{{\D t}_r}, \check{v}_{{\D t}_r}) - \Df(u,v),\del \hat{u}_{{\D t}_r})| + \left | \smallint_{0}^{T}(\del (\hat{u}_{{\D t}_r}-u), \Df(u,v)\del \phi) \right |\\
&\leq C_2(T) C_\phi  \| \Df(\check{u}_{{\D t}_r}, \check{v}_{{\D t}_r}) - \Df(u,v)\|_{L^2(Q)}+ \left | \smallint_{0}^{T}(\del (\hat{u}_{{\D t}_r}-u), \Df(u,v) \del \phi) \right | \to 0,
 \end{align*}
 \noindent
 as ${\D t}_r\to 0$. Since the weak limit is unique, we have $\Df(\check{u}_{{\D t}_r}, \check{v}_{{\D t}_r})\del \hat{u}_{{\D t}_r} \rightharpoonup \bm{\zeta}=\Df(u,v)\del u$. This shows that $(u,v)$ is a weak solution of $\hyperlink{link:Pw}{(\mathcal{P}w)}$.
\end{proof}

\section{Boundedness and existence of solutions of (EPH)}\label{Sec:BoundsandLimits}
\subsection{$L^\infty$-bounds on $u$ and $v$}
Next, we investigate whether a solution of $\hyperlink{link:Pw}{(\mathcal{P}w)}$ satisfies the maximum principle or not. This is an interesting question primarily due to two reasons. Firstly, the maximum principle is used to prove the existence of solutions of $\hyperlink{link:Peph}{(\mathcal{P}_{\mathrm{EPH}})}$ in the case when $k(0)=0$. This is discussed in details later. Secondly, for pseudo-parabolic equations arising from the regularisation parameter $\t$, it is shown in \cite{VANDUIJN2018232,mitra2018wetting} that the maximum principle does not hold. Having similar structure to the systems mentioned above, one might wonder if a maximum principle holds for $\hyperlink{link:Pw}{(\mathcal{P}w)}$. As it turns out, a maximum principle does hold in this case for the variables $u$ and $v$ under certain conditions on the advection term.  This is preferred as a property of the EPH since hysteresis alone is known not to cause deviation from the maximum principle \cite{VANDUIJN2018232}. However, it is to be noted that the maximum principle does not necessarily generalise to other advective terms, as is expected in the case of pseudo-parabolic equations.

\begin{proposition}
Assume that $\vfl(u,v)=\vfl(u)$. For  $v_l:=\inf\{ v_0\}$ and $v_r:=\sup \{ v_0\}$ let there exist a pair $\{u_l,u_r\}$  ($u_l<u_r$) such that $u_0(\bm{x})\in [u_l,u_r]$ for almost all $\bm{x}\in \Om$ and 
$$
\Psi_1(u_r,v_l)\leq 0\leq \Psi_2(u_r,v_l),\quad  \Psi_2(u_l,v_r)\leq 0\leq \Psi_1(u_l,v_r).
$$
Then the weak solution $(u,v)$ of \Cref{def:WeakSol} satisfies $v_l\leq v \leq v_r$ and $u_l \leq u \leq u_r$ a.e. \label{Pros:Bound}
\end{proposition}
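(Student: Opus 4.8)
The plan is to establish the four one-sided bounds by testing the weak formulation with the truncations $[u-u_r]_+$, $[u_l-u]_+$, $[v-v_r]_+$ and $[v_l-v]_+$. The crucial structural feature is that the sign hypotheses couple the bounds in pairs: the conditions $\Psi_1(u_r,v_l)\le 0\le \Psi_2(u_r,v_l)$ control the pair $\{u\le u_r,\ v\ge v_l\}$ simultaneously, while $\Psi_2(u_l,v_r)\le 0\le \Psi_1(u_l,v_r)$ control $\{u\ge u_l,\ v\le v_r\}$. Since $\Psi_1$ feeds the $u$-equation and $\Psi_2$ the $v$-equation, neither bound of a pair can be closed alone; I would therefore form the combined energy $E(t):=\tfrac12\norm{[u-u_r]_+}^2+\tfrac12\norm{[v_l-v]_+}^2$ and derive a single Gronwall inequality for it, and argue symmetrically for the other pair.

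For the upper bound on $u$, I test \eqref{Eq:weak1} with $\phi=[u-u_r]_+$, which lies in $H^1_0(\Om)$ because $u$ has zero trace and $u_r\ge 0$ (needed for admissibility, and forced anyway by compatibility with the homogeneous data). The standard chain rule for $u\in\W$ turns the time term into $\tfrac12\tfrac{d}{dt}\norm{[u-u_r]_+}^2$, while the diffusion term is $\int_\Om \Df(u,v)|\del[u-u_r]_+|^2\ge 0$ by \ref{ass:D}. The advection term vanishes: setting $\tilde{\bm{P}}(u):=\int_{u_r}^{\max\{u,u_r\}}\vfl(s)\,ds$, which is legitimate precisely because $\vfl=\vfl(u)$, one has $\del\cdot\tilde{\bm{P}}(u)=\vfl(u)\cdot\del[u-u_r]_+$ and $\tilde{\bm{P}}(u)$ carries zero trace (as $\tilde{\bm{P}}(0)=\bm 0$), so $\int_\Om \vfl(u)\cdot\del[u-u_r]_+=\int_{\p\Om}\tilde{\bm{P}}(u)\cdot\bm n=0$. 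This is the single place where $\vfl$ depending on $u$ alone is essential.

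It remains to control the reaction term $\int_\Om \Psi_1(u,v)[u-u_r]_+$. On $\{u>u_r\}$, monotonicity of $\Psi_1$ in $u$ (from \ref{ass:Psi}) gives $\Psi_1(u,v)\le \Psi_1(u_r,v)$, and combining $\Psi_1(u_r,v_l)\le 0$ with the Lipschitz and monotonicity-in-$v$ bounds of \ref{ass:Psi} yields $\Psi_1(u_r,v)\le \Psi_v\,[v_l-v]_+$; hence this term is bounded by $\tfrac{\Psi_v}{2}(\norm{[u-u_r]_+}^2+\norm{[v_l-v]_+}^2)$. For the $v$-equation I test \eqref{Eq:weak2} with $\xi=-[v_l-v]_+$ (here $\p_t v\in L^2(Q)$ makes the chain rule immediate), and use that on $\{v<v_l\}$ one has $\Psi_2(u,v)\ge\Psi_2(u,v_l)\ge \Psi_2(u_r,v_l)-\Psi_u[u-u_r]_+\ge -\Psi_u[u-u_r]_+$, to obtain $\tfrac12\tfrac{d}{dt}\norm{[v_l-v]_+}^2\le \tfrac{\Psi_u}{2}(\norm{[u-u_r]_+}^2+\norm{[v_l-v]_+}^2)$. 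Adding the two estimates and discarding the nonnegative diffusion contribution gives $E'(t)\le C\,E(t)$ with $C=\tfrac12(\Psi_u+\Psi_v)$.

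Finally, $E(0)=0$ since $u_0\le u_r$ and $v_0\ge v_l=\inf\{v_0\}$ a.e., so Gronwall forces $E\equiv 0$, i.e.\ $u\le u_r$ and $v\ge v_l$. Repeating the argument verbatim with $[u_l-u]_+$ and $[v-v_r]_+$, now invoking $\Psi_2(u_l,v_r)\le 0\le\Psi_1(u_l,v_r)$, yields $u\ge u_l$ and $v\le v_r$, completing the proof. I expect the main obstacle to be the two-way coupling: because the $u$- and $v$-bounds reinforce one another only through the cross terms $[u-u_r]_+\,[v_l-v]_+$, they must be closed jointly in one Gronwall inequality rather than sequentially. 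Secondary technical points are the vanishing of the advection term, which hinges on $\vfl=\vfl(u)$ together with the homogeneous boundary condition, and the admissibility of the truncations as $H^1_0$ test functions via the chain rule for $\langle\p_t u,[u-u_r]_+\rangle$.
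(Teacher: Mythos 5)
Your proposal is correct and follows the same overall strategy as the paper: testing \eqref{Eq:weak1} with $[u-u_r]_+$ and \eqref{Eq:weak2} with the truncation at $v_l$, exploiting the pairing structure of the sign hypotheses, closing both bounds of a pair jointly in a single Gronwall inequality, and arguing symmetrically for $\{u\geq u_l,\ v\leq v_r\}$ --- your reaction-term estimates are essentially the paper's (splitting via $\Psi_1(u_r,v)$ and $\Psi_2(u,v_l)$, using the monotonicity and Lipschitz parts of \ref{ass:Psi} to produce the cross terms $([v_l-v]_+,[u-u_r]_+)$), with the minor sharpening that you discard $\Psi_1(u,v)-\Psi_1(u_r,v)\leq 0$ on $\{u>u_r\}$ by monotonicity where the paper retains a $\Psi_u\norm{[u-u_r]_+}^2$ contribution. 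The one genuine divergence is the advection term. The paper subtracts the constant field $\vfl(u_r)$, whose integral against $\del[u-u_r]_+$ vanishes, then writes $\vfl(u)-\vfl(u_r)=\p_u\vfl\,[u-u_r]$ and absorbs the result into the diffusion term by Young's inequality at the cost of $\tfrac{\Df_m}{2}\smallint\norm{\del[u-u_r]_+}^2$; note this tacitly requires a bound on $\p_u\vfl$, which \ref{ass:F} does not literally provide (it bounds $\vfl$, not its derivative). Your antiderivative construction $\tilde{\bm{P}}(u)=\int_{u_r}^{\max\{u,u_r\}}\vfl(s)\,ds$ makes the term vanish \emph{exactly} via the divergence theorem, using only the boundedness of $\vfl$ (for the Sobolev chain rule) and the zero trace of $u$; it thus avoids the implicit Lipschitz assumption and preserves the full diffusion coercivity, at the price of making explicit the admissibility condition $u_r\geq 0$ (and, in the symmetric half you describe as verbatim, $u_l\leq 0$), which the paper also needs for $[u-u_r]_+\,\chi_t\in L^2(0,T;H^1_0(\Om))$ but leaves implicit. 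Both constraints are harmless here since they are forced by the homogeneous Dirichlet data whenever the conclusion is to hold, and are satisfied in the application to \hyperlink{link:EPH}{(EPH)}.
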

The rationals behind the assumptions used in the proposition are explained in \Cref{sec:ExistenceEPH} in the context of \hyperlink{link:EPH}{(EPH)}. 

\begin{proof}
We only show the proof that $u<u_r$ and $v>v_l$ a.e. and omit the other half of the proof since the arguments are identical. For any $t\in [0,T]$, let $\chi_{t}:[0,T]\to \{0,1\}$ denote the characteristic function of $[0,t]$, i.e. $\chi_{t}=1$ in $[0,t]$ and $\chi_{t}=0$ everywhere else. Taking $\phi= [u-u_r]_+\, \chi_{t}\in L^2(0,T;H^1_0(\Om))$ in \eqref{Eq:weak1} and recalling that $\smallint_0^T (\del \zeta, \del [\zeta]_+) =\smallint_0^T\|\del [\zeta]_+\|^2$ and $\smallint_0^T \langle[\zeta]_+,\p_t \zeta\rangle =\frac{1}{2} \| [\zeta]_+\|^2\big|_{t=0}^T$ for all $\zeta\in \W$ one has:
\begin{align*}
&\dfrac{1}{2} \|[u(t)-u_r]_+\|^2 + \Df_m \smallint_{0}^{t}  \|\del [u-u_r]_+\|^2 \leq \smallint_{0}^{t}\langle \p_t u, [u-u_r]_+\rangle +  \smallint_{0}^{t}(\Df(u,v)\del u ,\del [u-u_r]_+)\\
&= \smallint_{0}^{t}(\Psi_1(u,v),[u-u_r]_+) + \smallint_{0}^{t}(\vfl(u),\del [u-u_r]_+)\\
&\leq  \smallint_{0}^{t}\left(\Psi_1(u,v)-\Psi_1(u_r,v_l),[u-u_r]_+\right)+ \smallint_{0}^{t}(\vfl(u)-\vfl(u_r),\del [u-u_r]_+)\\
&= \smallint_{0}^{t}  \left(\Psi_1(u,v)-\Psi_1(u_r,v)+\Psi_1(u_r,v)-\Psi_1(u_r,v_l),[u-u_r]_+ \right) + \smallint_{0}^{t}(\p_u\vfl\, [u-u_r]_+,\del [u-u_r]_+)\\
&\leq   \Psi_u \smallint_{0}^{t}\|[u-u_r]_+\|^2 + \smallint_{0}^{t}\left(-\tfrac{\Psi_1(u_r,v)-\Psi_1(u_r,v_l)}{v-v_l}(v_l-v),[u-u_r]_+\right)+ C\smallint_{0}^{t}\|[u-u_r]_+\|\|\del [u-u_r]_+\|\\
&\leq \Psi_u \smallint_{0}^{t}\|[u-u_r]_+\|^2 + \Psi_v\smallint_{0}^{t}\left([v_l-v]_+,[u-u_r]_+\right)+ \frac{C^2}{2\Df_m}\smallint_{0}^{t}\|[u-u_r]_+\|^2 + \frac{\Df_m}{2}\smallint_{0}^{t}\|\del [u-u_r]_+\|^2\\
&\leq C_m\smallint_{0}^{t}[\|[u-u_r]_+\|^2 +\| [v_l-v]_+\|^2]+ \frac{\Df_m}{2}\smallint_{0}^{t}\|\del [u-u_r]_+\|^2. 
\end{align*}
Here $C,\,C_m>0$ are constants. The inequality $0\leq-\tfrac{\Psi_1(u_r,v)-\Psi_1(u_r,v_l)}{v-v_l}\leq \Psi_v$ follows from \ref{ass:Psi}.
 Moreover, $\vfl\in C^1(\R)$ and $\int_\Om \vfl(u_r)\cdot\del[u-u_r]_+=0$ are used. 
 
Similarly, using the test function $\xi= [v_l-v]_+ \chi_{t} $ in \eqref{Eq:weak2} yields
\begin{align*}
&\dfrac{1}{2} \norm{[v_l-v(t)]_+}^2= \smallint_{0}^{t}(-\Psi_2(u,v),[v_l-v]_+) \leq  \smallint_{0}^{t}(\Psi_2(u_r,v_l)-\Psi_2(u,v),[v_l-v]_+)\\
& \leq C_m\smallint_{0}^{t}[\|[u-u_r]_+\|^2 +\| [v_l-v]_+\|^2].
\end{align*}
Finally adding them yields for all $t\in [0,T]$ that
\begin{equation}
\norm{[u(t)-u_r]_+}^2 + \norm{[v_l-v(t)]_+}^2 \leq 4 C_m \smallint_{0}^{t} [\norm{[u-u_r]_+}^2 + \norm{[v_l-v]_+}^2].
\end{equation}
Since $\norm{[v_l-v_0]_+}=0$ and $\norm{[u_0-u_r]_+}^2=0$, we conclude from Gronwall's lemma that $ \norm{[u(t)-u_r]_+}=0$ and $\norm{[v_l-v(t)]_+}=0$ for all $t>0$. This proves the proposition.
\end{proof}

\subsection{Existence of solutions of (EPH)}\label{sec:ExistenceEPH}
The main result of this section is the existence of a solution to \hyperlink{link:Peph}{$(\mathcal{P}_{\mathrm{EPH}})$}. This is obtained first for the case when $k(0)>0$, and then for $k(0)=0$ in the absence of convective terms. 
\begin{theorem}
Assume \ref{prop:Pc}--\ref{prop:Rc}. 
\begin{itemize}
\item[(a)] If $k(0)>0$ then a solution of $\hyperlink{link:Peph}{(\mathcal{P}_{\mathrm{EPH}})}$ in terms of \Cref{Def:WeakSolEPH} exists.
\item[(b)] If $|\bm{g}|=0$, then a solution $(S,p)$ of $\hyperlink{link:Peph}{(\mathcal{P}_{\mathrm{EPH}})}$ exists for $k(0)=0$. Moreover, there exists saturations $0<S_l<S_r<1$ such that $S(\bm{x},t)\in [S_l,S_r]$ and $p(\bm{x},t)\in [\Pim(S_r),\Pdr(S_l)]$ for almost all $(\bm{x},t)\in Q$.
\end{itemize}\label{Theo:ExistenceEPH}
\end{theorem}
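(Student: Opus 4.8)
The plan is to obtain both statements from \Cref{Theo:Existence} through the reduction of \hyperlink{link:EPH}{(EPH)} to $\hyperlink{link:Ps}{(\mathcal{P}s)}$ recorded in \Cref{Sec:RelationEPHandPw}, namely the change of variables $u=b(p)$, $v=S+b(p)$ with the identifications \eqref{Eq:DefkPsi12}. With these, $\Psi_1=\Psi_2=\Phi_\tau$ satisfy \ref{ass:Psi} and $\vfl=\bm{g}k(v-u)$ satisfies \ref{ass:F} globally, while the data $(u_0,v_0)$ built from $(S_0,p_0)$ satisfy \ref{ass:Ini} with $U_m<u_0<U_M$, $V_m<v_0<V_M$. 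The only assumption in question is \ref{ass:D}: since $\Df=k(v-u)/b'(b^{-1}(u))$ and the consistency bound \eqref{Eq:BoundOfbprime} together with \ref{prop:Pc} force $b'(p)\to 0$ as $S=v-u\to 0$ or $S\to 1$, the coefficient $\Df$ blows up at both degenerate saturations. I would therefore first replace $\Df$ by a globally bounded $C^1$ coefficient $\tilde{\Df}$ coinciding with $\Df$ on a box $\mathcal{R}=[u_l,u_r]\times[v_l,v_r]$ on which $S$ stays strictly inside $(0,1)$; then $\tilde{\Df}$ meets \ref{ass:D}, \Cref{Theo:Existence} produces a weak solution $(u,v)$ of the truncated $\hyperlink{link:Pw}{(\mathcal{P}w)}$, and everything reduces to confining $(u,v)$ to $\mathcal{R}$, where $\tilde{\Df}=\Df$ and $(S,p)=(v-u,b^{-1}(u))$ solves $\hyperlink{link:Peph}{(\mathcal{P}_{\mathrm{EPH}})}$ in the sense of \Cref{Def:WeakSolEPH}.

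For part (b) the confinement is immediate. With $|\bm{g}|=0$ the flux vanishes, so $\vfl=\vfl(u)$ and \Cref{Pros:Bound} applies. Setting $v_l=\inf v_0$, $v_r=\sup v_0$ and $u_r=\rdr(v_l)$, $u_l=\rim(v_r)$, the corner identities $\Phi_\tau(u_r,v_l)=0=\Phi_\tau(u_l,v_r)$ hold because $u=\rdr(v)$ and $u=\rim(v)$ are the two edges of the dead zone of $\Phi_\tau$ in \eqref{Eq:DefPhi_eps}; the chain $\rim(v_r)\le\rim(v_0)\le u_0\le\rdr(v_0)\le\rdr(v_l)$, which uses the monotonicity in \ref{prop:Rc} and $\rim(v_0)\le u_0\le\rdr(v_0)$ from \ref{prop:Ini}, gives $u_0\in[u_l,u_r]$. \Cref{Pros:Bound} then yields $u\in[u_l,u_r]$, $v\in[v_l,v_r]$ a.e., so $S=v-u\in[S_l,S_r]$ with $S_l=v_l-\rdr(v_l)$, $S_r=v_r-\rim(v_r)$. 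Since $v-\rdr(v)$ and $v-\rim(v)$ are strictly increasing and $\rdr(V_m)=U_M=V_m$, $\rim(V_M)=U_m$, $V_M=1+U_m$, the strict inclusions $v_l>V_m$, $v_r<V_M$ give $0<S_l<S_r<1$; hence $k(v-u)$ is bounded away from $0$ despite $k(0)=0$, so $\tilde{\Df}=\Df$ along the trajectory, and the equivalences $u=\rim(v)\Leftrightarrow p=\Pim(S)$, $u=\rdr(v)\Leftrightarrow p=\Pdr(S)$ translate the $u$--bound into $p\in[\Pim(S_r),\Pdr(S_l)]$.

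Part (a) is where I expect the real difficulty. Here $k(0)>0$ removes the degeneracy of $k$, but $\Df$ still blows up at $S\in\{0,1\}$ through $b'$, so the solution must again be confined to $\mathcal{R}$. The obstruction is that for $\bm{g}\neq 0$ the flux $\vfl=\bm{g}k(v-u)$ depends genuinely on $v$, so \Cref{Pros:Bound} no longer applies: testing the $u$--equation with $[u-u_r]_+$ and splitting $k(v-u)-k(v_l-u_r)$ leaves a term proportional to $|v-v_l|$ on $\{u>u_r\}$ whose part with $v>v_l$ is not controlled by the tracked quantities, and a plain Gronwall argument cannot close. The route I would take is two-tiered and exploits the strong dissipativity of $\Phi_\tau$: first establish $u\in[U_m,U_M]$ by a Stampacchia/De~Giorgi truncation on the $u$--equation, where for $u>U_M\ge\rdr(v)$ the reaction contributes the damping $-\tfrac{1}{\tau}\|[u-U_M]_+\|^2$ that must be shown to dominate the bounded flux contribution $|\bm{g}|\,k(1)$; and then bound $v$ by an ODE comparison in \eqref{Eq:gde2}, since for $u\in[U_m,U_M]$ the sign of $\Phi_\tau$ forces $v$ to stay in $[V_m,V_M]$, whence $S=v-u\in[0,1]$. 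The delicate point is to carry out the truncation so that it returns the sharp endpoints (rather than $U_M$ up to an additive constant) and to keep the extensions of $\rim,\rdr$ off $[V_m,V_M]$ monotone, so that both the truncation estimate and the comparison remain consistent; reconciling the $v$--dependent flux with a sharp two-sided $L^\infty$ bound is the crux of part (a).
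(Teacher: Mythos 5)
Your part (b) follows the paper's argument almost verbatim: the same corners $u_r=\rdr(v_l)$, $u_l=\rim(v_r)$ with $\Phi_\tau(u_r,v_l)=\Phi_\tau(u_l,v_r)=0$, the same monotonicity chain giving $u_0\in[u_l,u_r]$ from \ref{prop:Rc} and \ref{prop:Ini}, confinement via \Cref{Pros:Bound}, and $S_l=v_l-u_r$, $S_r=v_r-u_l$ with the translation back to $p\in[\Pim(S_r),\Pdr(S_l)]$. Your replacement of the paper's truncated permeability $k_\mu$ (the paper routes (b) through part (a) applied with $k_\mu$) by a directly truncated coefficient $\tilde{\Df}$ is an inessential variation, though it makes (b) depend on an existence result for the truncated system rather than on (a) itself.

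Part (a) contains a genuine gap, and it is a gap of strategy, not of detail. You posit that the solution ``must again be confined to $\mathcal{R}$'' so that $\Df$ stays bounded, and you leave that confinement --- a sharp two-sided $L^\infty$ bound on $u$ under a $v$-dependent flux --- explicitly unresolved. But the confinement is neither achievable nor needed. Not achievable: with $\bm{g}\neq 0$ there is no maximum principle (this is precisely why \Cref{Pros:Bound} assumes $\vfl=\vfl(u)$, and why only statement (b), not (a), asserts $S\in[S_l,S_r]$); moreover even the sharp closed-box bound $u\in[U_m,U_M]$ you aim for would not suffice, since $\Df$ blows up exactly at the endpoints $u=U_m$ and $u=U_M$ (i.e. $S=1$ and $S=0$), while De Giorgi/Stampacchia truncation in any case produces thresholds only up to constants, never the strict interior bounds that boundedness of $\Df$ would demand. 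Not needed: the target formulation \hyperlink{link:Peph}{$(\mathcal{P}_{\mathrm{EPH}})$} is posed in terms of $k(S)[\del p-\bm{g}]$, not $\Df\del u$, so uniform parabolicity in $u$ is irrelevant in the limit. The paper instead regularises $b$ by $b_\d$ with $b_\d'\geq\d$ (\ref{prop:Pdelta}), so that \Cref{Theo:Existence} applies for each fixed $\d$; proves $\d$-uniform bounds (\Cref{lemma:deltaBoundsinW}) on $u_\d$ and, crucially, on $p_\d$ in $L^2(0,T;H^1_0(\Om))$ --- the test function $\phi=p_\d$ closes because $k\geq k(0)>0$ controls $\|\del p_\d\|^2$ directly and the gravity term is absorbed by Young's inequality, so $\bm{g}\neq0$ causes no difficulty at the energy level; and then passes $\d\to0$, identifying $u=b(p)$ from the monotonicity of $b$ together with $\|b_{\d}(p_\d)-b(p_\d)\|\leq C\d\,[1+\|p_\d\|]$. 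The bound $u\leq U_M$ that you try to force by truncation is then automatic from $u=b(p)$ and $U_M=b(\infty)$, and the only $L^\infty$ statement actually claimed in (a), namely $S\in[0,1]$, follows from the sign structure of $\Phi_\tau$ with the extended $\rim,\rdr$, by testing the ODE with $[V_m-v]_+$ (and analogously for $S\leq1$). In short, your plan front-loads an $L^\infty$ confinement that is false in general under gravity, whereas the proof needs only $\d$-uniform energy estimates on $p_\d$ (available from $k(0)>0$) plus a one-sided comparison in the ODE after the limit.
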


To begin with, we observe that \ref{prop:b} implies $b'(p)\to 0$ for $p\to \pm \infty$ which might cause the model to become degenerate. 
Therefore, we consider a non-degenerate system that approximates $\hyperlink{link:Peph}{(\mathcal{P}_{\mathrm{EPH}})}$ first. Let $\rim(v)$ and $\rdr(v)$, defined in \eqref{def:Rimdr}, be extended to $\R$ such that $\rim(v)=\rdr(v)=U_M$ for $v\leq V_m$, and $\rim(v)=\rdr(v)=U_m$ for $v\geq V_M$. For $\Phi_\tau$ defined in \eqref{Eq:DefPhi_eps} this implies that $\|\Phi_\tau(u,v)\|^2 < C[1+\|u\|^2]$, since $\rim,\;\rdr$ are bounded.

For some $\d>0$ small enough, let $\Pim(1)<p^\d_l<p^\d_r<\infty$ be such that $b'(p^\d_l)=b'(p^\d_r)=\d$. Without loss of generality, assume  that $b'(p)>\d$ for $p^\d_l<p<p^\d_r$. Define $b_\d\in C^1(\R)$ to be a regularised version of $b(p)$ such that (see \Cref{fig:TheoremEPH} (left))
\begin{enumerate}[label=(P$\d$)]
\item $b_\d'(p)\geq \d>0$ for all $p\in \R$ with $b_\d'(p)=\d$ for $\{p\leq p^\d_l\}\cup\{p\geq p^\d_r\}$ and $b_\d(p)=b(p)$ for $p^\d_l<p<p^\d_r$. Clearly, $|b_\d(p)-b(p)|<C\d[|p|+1]$ for some constant $C>0$.\label{prop:Pdelta}
\end{enumerate}
Consequently, as $p_0$ satisfies  \ref{prop:Ini}, $p^\d_l<p_0<p^\d_r$ for $\d$ small enough, making $b_\d(p_0)=b(p_0)$.

\begin{figure}[h!]
\begin{subfigure}{.5\textwidth}
\begin{tikzpicture}[xscale=.35,yscale=3,domain=0:1,samples=100]

\draw[->] (0,-.2)--(0,1.2) node[right, scale=1.5] {$u$};
\draw[->] (-10,-0.1)--(10,-0.1) node[above,scale=1.5] {$p$};

\draw[dashed] (10,1)--(0,1) node[scale=1,left] {$U_M$};
\draw[dashed] (-10,0)--(0,0);
\node[scale=1,above] at (-9,0) {$U_m$};

\draw[ultra thick, green] (-10,0.0024814)--(-9.6,0.0026908)--(-9.2,0.0029278)--(-8.8,0.0031974)--(-8.4,0.0035059)--(-8,0.0038611)--(-7.6,0.0042729)--(-7.2,0.0047539)--(-6.8,0.0053204)--(-6.4,0.005994)--(-6,0.006803)--(-5.6,0.0077862)--(-5.2,0.0089968)--(-4.8,0.01051)--(-4.4,0.012434)--(-4,0.014929)--(-3.6,0.018241)--(-3.2,0.02276)--(-2.8,0.029129)--(-2.4,0.038462)--(-2,0.052786)--(-1.6,0.076001)--(-1.2,0.11589)--(-0.8,0.18765)--(-0.4,0.3143)--(0,0.5)--(0.4,0.6857)--(0.8,0.81235)--(1.2,0.88411)--(1.6,0.924)--(2,0.94721)--(2.4,0.96154)--(2.8,0.97087)--(3.2,0.97724)--(3.6,0.98176)--(4,0.98507)--(4.4,0.98757)--(4.8,0.98949)--(5.2,0.991)--(5.6,0.99221)--(6,0.9932)--(6.4,0.99401)--(6.8,0.99468)--(7.2,0.99525)--(7.6,0.99573)--(8,0.99614)--(8.4,0.99649)--(8.8,0.9968)--(9.2,0.99707)--(9.6,0.99731)--(10,0.99752);

\draw[orange, ultra thick, dashdotted] (-6,-.2)--(-1.5,0.083)--(-1.2,0.11589)--(-0.8,0.18765)--(-0.4,0.3143)--(0,0.5)--(0.4,0.6857)--(0.8,0.81235)--(1.2,0.88411)--(1.5,0.916)--(6,1.2);

\draw[dashed] (1.5,0.916)--(1.5,-.1) node[scale=1,below] {$p^\d_r$};
\draw[dashed] (-1.5,0.083)--(-1.5,-.1) node[scale=1,below] {$p^\d_l$};
\draw[ultra thick, dotted] (3.9714,1.0744)--(4.5438,1.0287)--(4.8658,0.9601)--(4.93,0.916);
\draw[thick] (1.5,0.916)--(7,.916);
\node[scale=0.8,below] at (8,.8) {$\tan^{-1}(\d)$};
\draw[->] (8,.8)--(4.5438,1.0287);

\node[draw=black,thick,rounded corners=2pt,below left=2mm, scale=1] at (-2,.8){%
\begin{tabular}{@{}r@{ }l@{}}
 \raisebox{2pt}{\tikz{\draw[ultra thick, green, domain=0.05:0.06] (0,0) -- (5mm,0);}}&$b(p)$\\
  \raisebox{2pt}{\tikz{\draw[dashed,ultra thick, orange, domain=0.05:0.06] (0,0) -- (5mm,0);}}&$b_\d(p)$
\end{tabular}};
\end{tikzpicture}
\end{subfigure}
\begin{subfigure}{.5\textwidth}
\begin{tikzpicture}[xscale=5.5,yscale=3.5,domain=0:1,samples=100]

\draw[->] (0,0)--(0,1.1) node[right, scale=1.5] {$u$};
\draw[->] (0,0)--(1.15,0) node[above,scale=1.5] {$v$};
\node[scale=1,below left] at (0,0) {$U_m$};

\draw[ultra thick, red] (1,0)--(1,1.4089e-16)--(1,9.417e-13)--(1,7.0296e-11)--(1,1.4999e-09)--(1,1.6107e-08)--(1,1.1204e-07)--(1,5.7752e-07)--(1,2.3906e-06)--(1,8.3691e-06)--(0.99999,2.5672e-05)--(0.99999,7.0762e-05)--(0.99996,0.00017856)--(0.99992,0.00041836)--(0.99982,0.00092013)--(0.99962,0.0019161)--(0.99924,0.0038041)--(0.99855,0.0072397)--(0.99735,0.013265)--(0.9953,0.023476)--(0.99195,0.040225)--(0.98663,0.066826)--(0.97846,0.10768)--(0.96636,0.16819)--(0.94915,0.25423)--(0.9258,0.37101)--(0.88956,0.49029)--(0.83589,0.58598)--(0.7684,0.66167)--(0.69258,0.72103)--(0.61469,0.76734)--(0.5403,0.80341)--(0.47335,0.83151)--(0.41584,0.85343)--(0.36813,0.87058)--(0.32956,0.88404)--(0.2989,0.89464)--(0.27481,0.90302)--(0.25601,0.90967)--(0.24139,0.91498)--(0.23002,0.91922)--(0.22118,0.92264)--(0.21429,0.9254)--(0.20891,0.92763)--(0.20468,0.92945)--(0.20136,0.93093)--(0.19873,0.93215)--(0.19664,0.93315)--(0.19497,0.93397)--(0.19364,0.93466)--(0.19257,0.93523)--(0,.935);

\draw[blue, ultra thick, dashdotted] (1,0)--(1,1.4089e-16)--(0.99999,9.417e-13)--(0.99996,7.0296e-11)--(0.9999,1.4999e-09)--(0.99978,1.6107e-08)--(0.99953,1.1204e-07)--(0.99903,5.7752e-07)--(0.99811,2.3906e-06)--(0.99644,8.3691e-06)--(0.99356,2.5672e-05)--(0.98874,7.0762e-05)--(0.98091,0.00017856)--(0.96864,0.00041836)--(0.95003,0.00092013)--(0.92288,0.0019161)--(0.88495,0.0038041)--(0.83451,0.0072397)--(0.7712,0.013265)--(0.69674,0.023476)--(0.61517,0.040225)--(0.53236,0.066826)--(0.45473,0.10768)--(0.38795,0.16819)--(0.33605,0.25423)--(0.30114,0.37101)--(0.27729,0.49029)--(0.25815,0.58598)--(0.24301,0.66167)--(0.23114,0.72103)--(0.22188,0.76734)--(0.21466,0.80341)--(0.20904,0.83151)--(0.20466,0.85343)--(0.20123,0.87058)--(0.19854,0.88404)--(0.19642,0.89464)--(0.19474,0.90302)--(0.19341,0.90967)--(0.19235,0.91498)--(0.1915,0.91922)--(0.19082,0.92264)--(0.19027,0.9254)--(0.18982,0.92763)--(0.18946,0.92945)--(0.18916,0.93093)--(0.18891,0.93215)--(0.18871,0.93315)--(0.18855,0.93397)--(0.18841,0.93466)--(0.1883,0.93523);

\draw[dashed] (0.1883,0.93523)--(.1883,0) node[scale=1,below] {$V_m$};
\node[scale=1,below] at (1,0) {$V_M$};
\draw[dashed] (.45,0.93523)--(0,0.93523) node[scale=1,above left] {$U_M$};
\draw[thick] (0.05,0.875)--(.3,1) node[scale=0.8,above] {$S=0$};
\draw[thick] (0.05,0.765)--(.52,1) node[scale=0.8,above right] {$S=S_l$};
\draw[thick] (0.05,0.625)--(.7,.95) node[scale=0.8,right] {$S=\epsilon$};
\draw[thick] (0.45,0)--(.75,.15);
\node[scale=0.8,above] at (.8,.15) {$S=1-\epsilon$};
\draw[thick] (0.64,0)--(.84,.1);
\node[scale=0.8,right] at (.82,.1) {$S=S_r$};

\draw[ultra thick,dotted] (0.3,0.37)--(.3,.89);
\draw[thick,dashed] (0.3,0.37)--(0.3,0) node[scale=1,below] {$v_l$};
\draw[thick,dashed] (.3,.89)--(0,0.89) node[scale=1,left] {$u_r$};

\draw[ultra thick,dotted] (0.6768,0.025)--(0.6768,.7335);
\draw[thick,dashed] (0.6768,0.025)--(0.6768,0) node[scale=1,below] {$v_r$};
\draw[thick,dashed] (0.6768,0.025)--(0,0.025) node[scale=1,left] {$u_l$};

\draw[fill,pink!60] (0.54839,0.73776)--(0.47755,0.72145)--(0.44052,0.6958)--(0.40349,0.66783)--(0.39551,0.66618)--(0.35403,0.59621)--(0.30841,0.48834)--(0.30012,0.43586)--(0.33537,0.32507)--(0.38514,0.26093)--(0.42454,0.22012)--(0.48882,0.18513)--(0.54482,0.17638)--(0.55104,0.17638)--(0.60635,0.17599)--(0.64338,0.21795)--(0.66592,0.29254)--(0.67397,0.37179)--(0.64982,0.47436)--(0.62406,0.60023)--(0.58381,0.68881)--(0.54034,0.73776);

\node[scale=1] at (0.5133,0.4418) {$(v_0,u_0)$};

\node[draw=black,thick,rounded corners=2pt,below left=2mm, scale=1] at (1.2,.9){%
\begin{tabular}{@{}r@{ }l@{}}
 \raisebox{2pt}{\tikz{\draw[ultra thick, red, domain=0.05:0.06] (0,0) -- (5mm,0);}}&$\rdr$\\
  \raisebox{2pt}{\tikz{\draw[dashed,ultra thick, blue, domain=0.05:0.06] (0,0) -- (5mm,0);}}&$\rim$
\end{tabular}};
\end{tikzpicture}
\end{subfigure}
\caption{(left) Schematic for $b(p)$ and $b_\d(p)$ used in the proof of \Cref{Theo:ExistenceEPH}(a). The values $p^\d_l$, $p^\d_r$, $U_m$ and $U_M$ are marked. (right) The bounds of $u$, $v$ and $S$ from  \Cref{Theo:ExistenceEPH} (b) shown in the $v$-$u$ plain. The black parallel lines are representing $v-u=S=\textit{constant}$. The values $v_l$, $v_r$, $u_l$, $u_r$, $S_l$ and $S_r$ are shown, as well as the set $(v_0,u_0)$.}\label{fig:TheoremEPH}
\end{figure}
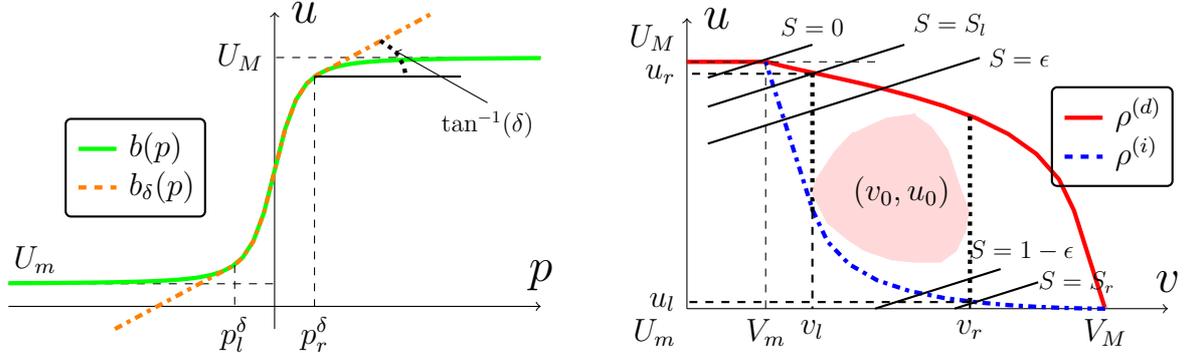

We look for solutions $(p_\d,u_\d,v_\d)$, with $p_\d,\,u_\d\in \W$ and $v_\d -S_0 -b(p_0)\in \W$, such that for any $\phi\in L^2(0,T;H^1_0(\Om))$ and $\xi\in L^2(Q)$ the following is satisfied
\begin{subequations}\label{Eq:bdelta}
\begin{numcases}{\hypertarget{link:Pd}{(\mathcal{P}_\d)}}
\smallint_0^T\langle \p_t u_\d, \phi\rangle + \smallint_0^T(k(v_{\d}-u_\d)[\del p_{\d}-\bm{g}], \del \phi)=\smallint_0^T( \Phi_\tau(u_\d,v_\d),\phi);\label{Eq:bdelta1}\\
(\p_t v_\d,\xi)=(\Phi_\tau(u_\d,v_\d),\xi),\quad  u_\d= b_\d(p_\d)\quad  \text{ in } \bar{Q};\label{Eq:bdelta2}
\end{numcases}
\end{subequations}
with $u_\d(0)=b(p_0)$ and $v_\d(0)=S_0+ b(p_0)$. 

The existence of such a triplet $(p_\d,u_\d,v_\d)$ follows from \Cref{Theo:Existence} by setting $\vfl$, $\Psi_1$, $\Psi_2$ as in \eqref{Eq:DefkPsi12} and $\Df(u,v):=\frac{k(v-u)}{b_\d'(b_\d^{-1}(u))}$. For $k(0)>0$ it follows directly that all the assumptions \ref{ass:D}--\ref{ass:Ini} of \Cref{Theo:Existence} are satisfied. Hence $(p_\d,u_\d,v_\d)$ exists. We show uniform bounds of $p_\d,\;u_\d$ and $v_\d$ with respect to $\d$ for this case.
\begin{lemma}
Assume \ref{prop:Pc}--\ref{prop:Rc}, \ref{prop:Pdelta}, $k(0)>0$ and $\d$ small so that $b_\d(p_0)=b(p_0)$. Let $(p_\d,u_\d,v_\d)$ be a solution of the problem $\hyperlink{link:Pd}{(\mathcal{P}_\d)}$, i.e. \eqref{Eq:bdelta}. 
Then, $p_\d$ and $u_\d$ are uniformly bounded in $L^2(0,T;H^1_0(\Om))$ and $\W$ respectively, whereas, $v_\d-S_0-b(p_0)$ is uniformly bounded in $\W$.\label{lemma:deltaBoundsinW}
\end{lemma}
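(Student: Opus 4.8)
The plan is to re-derive the energy estimates of \Cref{Lemma:ExistenceW} for the triplet $(p_\d,u_\d,v_\d)$, this time tracking the dependence of every constant on $\d$. The obstacle is that the diffusion coefficient $\Df(u,v)=k(v-u)/b_\d'(b_\d^{-1}(u))$ has upper bound $\Df_M\sim k(1)/\d\to\infty$, so the bounds furnished directly by \Cref{Theo:Existence} degenerate as $\d\to 0$. The remedy is to draw coercivity from $k(0)>0$ rather than from $b_\d'\geq\d$, which forces us to test \eqref{Eq:bdelta1} with $p_\d$; but then the source $\Phi_\tau(u_\d,v_\d)$ grows linearly in $u_\d=b_\d(p_\d)$ with a coefficient of order $1/\tau$ that \emph{cannot} be absorbed into $k(0)\norm{\del p_\d}^2$. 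I therefore split the argument into a preliminary $L^\infty(0,T;L^2(\Om))$ bound on $u_\d$, which controls $\Phi_\tau$ uniformly, followed by the gradient estimate. Throughout, $b_M:=\sup_{\R}b'<\infty$ is a $\d$-independent Lipschitz bound for $b_\d$ (finite since $b'>0$ is continuous with $b'(\pm\infty)=0$, and $b_\d'\leq b_M$ by construction).

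For the first step, test \eqref{Eq:bdelta1} with $\phi=u_\d\,\chi_t$, which is legitimate since $p_\d=0$ and hence $u_\d=b_\d(p_\d)=0$ on $\p\Om$, so $u_\d\in L^2(0,T;H^1_0(\Om))$. Writing $\del u_\d=b_\d'(p_\d)\del p_\d$, the diffusion and gravity terms combine into $\int_\Om k(v_\d-u_\d)\,b_\d'(p_\d)(\del p_\d-\bm{g})\cdot\del p_\d$, which after completing the square is bounded below by $-\tfrac12|\bm{g}|^2 k(1)\,b_M|\Om|$. The crucial point is the sign of the source: on $\{\Phi_\tau\neq 0\}$ the integrand $\Phi_\tau(u_\d,v_\d)\,u_\d$ equals $\tfrac1\tau(\rim(v_\d)-u_\d)u_\d$ or $\tfrac1\tau(\rdr(v_\d)-u_\d)u_\d$, and since $\rim,\rdr$ are bounded by \ref{prop:Rc}, the dissipative part $-u_\d^2/\tau$ may be discarded, leaving $(\Phi_\tau(u_\d,v_\d),u_\d)\leq \tfrac{C}{\tau}\norm{u_\d}$. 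Gronwall's lemma then yields $\norm{u_\d(t)}\leq C(\tau,T)$ uniformly in $\d$, whence $\norm{\Phi_\tau(u_\d,v_\d)}\leq\tfrac1\tau(C+\norm{u_\d})\leq C(\tau,T)$ uniformly as well.

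For the second step, test \eqref{Eq:bdelta1} with $\phi=p_\d\,\chi_t$. The time term is handled by the nonlinear chain rule of Alt--Luckhaus type: with $\mathcal{B}_\d(p):=\int_0^p s\,b_\d'(s)\,ds\geq 0$ one has $\int_0^t\langle\p_t u_\d,p_\d\rangle=\int_\Om \mathcal{B}_\d(p_\d(t))-\int_\Om\mathcal{B}_\d(p_0)$, where $\int_\Om\mathcal{B}_\d(p_0)\leq \tfrac{b_M}{2}\norm{p_0}^2$ is uniformly bounded. Now the diffusion term is uniformly coercive, $\int_0^t(k(v_\d-u_\d)\del p_\d,\del p_\d)\geq k(0)\int_0^t\norm{\del p_\d}^2$; gravity is absorbed by Young's inequality; and the source is tamed by the first step together with Poincar\'e, $\int_0^t(\Phi_\tau,p_\d)\leq C_\Om\norm{\Phi_\tau}_{L^2(Q)}\norm{\del p_\d}_{L^2(Q)}\leq\tfrac{k(0)}{4}\int_0^t\norm{\del p_\d}^2+C$. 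This closes the estimate and gives a uniform bound on $p_\d$ in $L^2(0,T;H^1_0(\Om))$; since $\norm{\del u_\d}\leq b_M\norm{\del p_\d}$, the same follows for $u_\d$. The bound $\p_t u_\d\in L^2(0,T;H^{-1}(\Om))$ is then read off from \eqref{Eq:bdelta1} exactly as in Step 3 of \Cref{Lemma:ExistenceW}, using $k\leq k(1)$ and the uniform bounds on $\del p_\d$ and $\Phi_\tau$; hence $u_\d\in\W$ uniformly.

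Finally, for $v_\d$ the identity $\p_t v_\d=\Phi_\tau(u_\d,v_\d)$ gives $\norm{\p_t v_\d}\leq C(\tau,T)$ uniformly, so $\p_t v_\d\in L^2(0,T;L^2(\Om))\hookrightarrow L^2(0,T;H^{-1}(\Om))$ uniformly. For the spatial regularity I repeat the difference-quotient argument of Step 2 of \Cref{Lemma:ExistenceW}: differentiating \eqref{Eq:bdelta2} in a direction $\hat{\bm{e}}$, testing with $D^h v_\d$, and using the $\d$-independent Lipschitz constants of $\Phi_\tau$ (from \ref{ass:Psi}, equivalently \ref{prop:Rc}) together with the now-uniform bound on $\norm{\del u_\d}_{L^2(Q)}$, Gronwall yields $\norm{\del v_\d}$ bounded uniformly in $\d$ and $h$, and the $C^1$ boundary permits passing $h\to0$. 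Since $p_\d=0$ on $\p\Om$ forces $v_\d-v_0=S_0+b_\d(p_\d)-S_0-b(p_0)$ to vanish there, we obtain $v_\d-S_0-b(p_0)\in L^2(0,T;H^1_0(\Om))$, hence uniformly in $\W$. I expect the essential difficulty to be exactly the first step: recognizing that coercivity must come from $k(0)$, and that the otherwise unabsorbable source can be controlled by a prior $L^2$ bound on $u_\d$ extracted from the favourable sign of the $-u_\d^2/\tau$ contribution to $(\Phi_\tau,u_\d)$.
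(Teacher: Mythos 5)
Your proof is correct and takes essentially the same route as the paper's: you test \eqref{Eq:bdelta1} first with $u_\d$ (plus Gronwall) to get the uniform $L^\infty(0,T;L^2(\Om))$ bound on $u_\d$ and hence on $\Phi_\tau$, then with $p_\d$ using the convex primitive $\smallint_0^{p}\vr\, b_\d'(\vr)\,d\vr\geq 0$, the coercivity $k\geq k(0)>0$ and Poincar\'e to absorb the source, and finally you reuse Steps 2--3 of \Cref{Lemma:ExistenceW} for the regularity of $v_\d$ and $\p_t u_\d$, exactly as the paper does. The only cosmetic deviations are that the paper retains the coercive term $\tfrac{k(0)}{b_M}\smallint_0^T\norm{\del u_\d}^2$ already in the $u_\d$-test (where you instead recover it afterwards from $\norm{\del u_\d}\leq b_M\norm{\del p_\d}$), and that it controls $(\Phi_\tau,u_\d)$ via the linear-growth bound $\norm{\Phi_\tau(u,v)}^2\leq C[1+\norm{u}^2]$ together with Gronwall, so the favourable sign of the $-u_\d^2/\t$ contribution that you emphasize, while valid, is not actually needed.
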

\begin{proof}
To show this, we first use the test function $\phi=u_\d$ in \eqref{Eq:bdelta1} which gives
\begin{align*}
&\|u_\d(T)\|^2- \|b(p_0)\|^2+ \frac{k(0)}{b_M} \smallint_0^T \|\del u_\d\|^2\\
&\leq \smallint_0^T\langle \p_t u_\d, u_\d\rangle + \smallint_0^T(k(v_{\d}-u_{\d})\del p_{\d}, b_\d'(p_\d)\del p_\d)\\
&\leq \smallint_0^T(k(v_{\d}-u_{\d})\,\bm{g},\del u_\d) + \smallint_0^T(\Phi_\tau,u_\d)\leq  \frac{b_M}{2k(0)}\|k(1)\,\bm{g}\|^2 T + \frac{k(0)}{2 b_M}\smallint_0^T\|\del u_\d\|^2 + C\smallint_0^T [1+ \|u_\d\|^2].
\end{align*}
Here $b_M:=\max_{\vr\in \R}b'(\vr)= \max_{\vr\in \R}b_{\d}'(\vr)$. Using Gronwall's lemma, we directly get the boundedness of $u_\d$ in $L^2(0,T;H^1_0(\Om))$. Using the test function $\phi=p_\d$ in \eqref{Eq:bdelta1} gives
\begin{align*}
&\left\|\smallint_0^{p_\d(T)} \vr b_\d'(\vr)d\vr\right\|_{1}-\left\|\smallint_0^{p_0} \vr b_\d'(\vr)d\vr\right\|_{1}+ k(0) \|\del p_\d\|^2\\
&\leq \smallint_0^T\langle b'(p_\d)\p_t p_\d, p_\d\rangle + \smallint_0^T(k(v_{\d}-u_{\d})\del p_{\d}, \del p_\d)= \smallint_0^T\langle \p_t u_\d, p_\d\rangle + \smallint_0^T(k(v_{\d}-u_{\d})\del p_{\d}, \del p_\d)\\
&= \smallint_0^T(k(v_{\d}-u_{\d})\,\bm{g},\del p_\d) + \smallint_0^T(\Phi_\tau,p_\d)\leq \smallint_0^T(k(v_{\d}-u_{\d})\,\bm{g},\del p_\d) + \smallint_0^T\|\Phi_\tau\|\|p_\d\|\\
&\leq  \frac{\|k(1)\,\bm{g}\|^2}{k(0)} T + \frac{k(0)}{4}\smallint_0^T\|\del p_\d\|^2 + \dfrac{C^2_\Om}{k(0)}\smallint_0^T\|\Phi_\tau\|^2 + \frac{k(0)}{4}\smallint_0^T \|\del p_\d\|^2.
\end{align*}
We have used Poincar$\acute{\text{e}}$ inequality $\|p_\d\|\leq C_\Om \|\del p_\d\|,$ and the fact that $\smallint_0^{p} \vr b_\d'(\vr)d\vr\geq 0$ for all $p\in \R$   here. This  gives that $p_\d$ is bounded uniformly in $L^2(0,T;H^1_0(\Om))$. Finally, by following the steps of  \Cref{Lemma:ExistenceW} (Step 2), one concludes that $v_\d\in L^\infty(0,T;H^1(\Om))$ while $\p_t v_\d\in  L^\infty(0,T;L^2(\Om))$, the bounds being uniform in both cases.
Moreover, following the steps of (Step 3) of \Cref{Lemma:ExistenceW}
$$
 \|\p_t u_\d\|_{H^{-1}}\leq k(1)\|\bm{g}\| + k(1) \|\del p_\d\| + C_\Om\|\Phi_\tau\|,
$$
which shows that $\p_t u_\d\in L^2(0,T; H^{-1}(\Om))$ is bounded uniformly, thus, concluding the proof.
\end{proof}
\begin{proof}[\textbf{Proof of \Cref{Theo:ExistenceEPH}(a)}]
Define $S_\d:=v_\d-u_\d$. From  \Cref{lemma:deltaBoundsinW} it follows that there exists a sequence $\{\d_r\}_{r\in\N}$ with $\lim \d_r=0$ such that ($\to$ implies strong and $\rightharpoonup$ implies weak convergence)
\begin{align*}
&u_{\d_r} \to u,\; v_{\d_r}\to v,\; S_{\d_r}\to S=v-u \text{ in } L^2(Q);\\
&p_{\d_r} \rightharpoonup p  \text{ in } L^2(0,T;H^1_0(\Om)),\; \p_t u_{\d_r} \rightharpoonup \p_t u \text{ in } L^2(0,T;H^{-1}(\Om)) \text{ and } \p_t v_{\d_r} \rightharpoonup \p_t v \text{ in } L^2(Q).
\end{align*}
Hence, following the proof of \Cref{Theo:Existence} we conclude  that $u,v,p$ and $S$ satisfy
$$
\smallint_0^T\langle \p_t S, \phi\rangle = \smallint_0^T(k(S)[\del p-\bm{g}], \del \phi)\quad \text{ and } \quad \smallint_0^T(\p_t v,\xi)= \smallint_0^T (\Phi_\tau(u,v),\xi),
$$
for all $\phi\in L^2(0,T;H^1_0(\Om))$ and $\xi\in L^2(Q)$. For completeness, we still need to show that $u=b(p)$ and $S\in [0,1]$ a.e.
To show the former, observe that $b(p_{\d_r})\to u$ in $L^2(Q)$, since,
\begin{equation}
\|u-b(p_{\d_r})\|\leq \|u-u_{\d_r}\|+\|b_{\d_r}(p_{\d_r})-b(p_{\d_r})\|\to 0. \label{Eq:UtoBpd}
\end{equation}
The first term on the right vanishes as $u_\d \to u$ strongly. For the second we use \ref{prop:Pdelta}, giving $\|b_{\d_r}(p_{\d_r})-b(p_{\d_r})\|\leq C\d_r [1+ \|p_{\d_r}\|]$ for some $C>0$, which approaches 0 as $\d_r\to 0$.
Now, from \ref{prop:b} one gets for $b_M=\max_{\vr\in \R}b'(\vr)<\infty$,
\begin{align*}
&\|b(p)- b(p_{\d_r})\|^2\leq b_M (b(p)- b(p_{\d_r}),p- p_{\d_r})\\
&=b_M(b(p)- u,p- p_{\d_r} )+ b_M (u-b(p_{\d_r}),p- p_{\d_r} ).
\end{align*}
As $\d_r\to 0$, the first term in the right vanishes due to the weak convergence of $p_{\d_r}$ and the second term vanishes due to \eqref{Eq:UtoBpd}, thus proving $u=b(p)$.

Finally,  $u=b(p)\in [U_m,U_M]$ a.e. implies that $v\in [V_m,V_M]$ a.e. To see this, we use the test function $\xi=[V_m-v]_+$ in \eqref{Eq:bdelta2} to get
$$
\tfrac{1}{2}\|[V_m-v]_+(T)\|^2=\tfrac{1}{2}\smallint_0^T \p_t \|[V_m-v]_+\|^2=(-\Phi_\tau(u,v),[V_m-V]_+)=\tfrac{1}{\t}(u-\rim(v),[V_m-V]_+)\leq 0.
$$
Here, as $v<V_m$ and $u<U_M$, $\Phi_\tau=\frac{\rim(v)-u}{\t}\geq \frac{\rim(V_m)-U_M}{\t}= 0$, see also \Cref{fig:TheoremEPH} (right). This proves that 
$\|[V_m-v]_+\|=0$ meaning $v\geq V_m$ a.e. implying that $S=v-u\geq V_m-U_M=0$. Similarly one shows that $S\leq 1$ which concludes our proof.
\end{proof}

\begin{proof}[\textbf{Proof of \Cref{Theo:ExistenceEPH}(b)}] Since $k(0)=0$, we approximate $k$ by $k_\mu:\R\to\R^+$, $\mu>0$, satisfying $k_\mu(S)= k(\mu)>0$ for $S<\mu$ and $k_\mu(S)=k(S)$ otherwise. Then the solution $(S_\mu,p_\mu)$ of $\hyperlink{link:Peph}{(\mathcal{P}_{\mathrm{EPH}})}$ exists with $k_\mu$ replacing $k$. Define $u_\mu=b(p_\mu)$ and $v_\mu=S_\mu+b(p_\mu)$, as before. The pair $\{v_l, v_r\}$ is defined as in \Cref{Pros:Bound}, i.e. $v_l=\inf\{S_0 + b(p_0)\},\;v_r=\sup\{S_0 + b(p_0)\}$. Note that, $v_l,v_r\in (V_m,V_M)$ due to \ref{prop:Ini}, see \Cref{fig:TheoremEPH} (right). Define $u_l,u_r\in (U_m,U_M)$ as
$$
u_r=\rdr(v_l),\quad u_l=\rim(v_r) \quad  \text{ giving }\quad \Phi_\tau(v_l,u_r)=\Phi_\tau(v_r,u_l)=0,
$$
see \Cref{fig:TheoremEPH} (right) for clarification. Moreover, since $\Pim(S_0)\leq p_0 \leq \Pdr(S_0)$, it implies that $u_0=b(p_0)\in [u_l,u_r]$. Thus, recalling $|\bm{g}|=0$, we have from  \Cref{Pros:Bound} that $u_l\leq u_\mu\leq u_r$ and $v_l\leq v_\mu\leq v_r$ a.e. Consequently, $S_\mu=v_\mu-u_\mu\geq v_l-u_r>0$. Defining 
\begin{equation}
S_l:=v_l-u_r \text{ and } S_r:=v_r-u_l \text{ one has } 0<S_l\leq S_\mu\leq S_r<1 \text{ a.e. in } Q.
\end{equation}
By taking $\mu<S_l$, one actually gets $k_\mu(S_\mu)\geq k(S_l)>0$ a.e. in $Q$. Thus, passing $\mu\to 0$, we obtain the solution.
\end{proof}

\section{Behaviour when $\t\to 0$}\label{Sec:EpsToZero}
In this section we address one other important question: what is the behavior of $(S,p)$ for the limit $\t\to 0$ which corresponds to the unregularised EPH model. Ideally $\Pim(S)\leq p\leq \Pdr(S)$ (or equivalently  $\rim(v)\leq u\leq \rdr(v)$) should be satisfied in the pure hysteresis case, i.e., when $\t\to 0$. One might also wonder whether a limiting solution exists in this case or not. The problem is addressed for the play-type hysteresis model \eqref{Eq:playtype} in \cite[Theorem 3.2]{schweizer2007averaging} for a reduced case (linear, no advection etc.), however, with stochastic variation of coefficients included. For the nonlinear case, no results are available to our knowledge even for the play-type model. In this section, we take a step towards understanding the limiting case $\t\to 0$. 
\begin{proposition} Let the assumptions of \Cref{Theo:ExistenceEPH}  hold. For a given $\t>0$, let $(S_\t,p_\t)$ denote a weak solution of $\hyperlink{link:Peph}{(\mathcal{P}_{\mathrm{EPH}})}$ in terms of \Cref{Def:WeakSolEPH}. Further let $u_\t:=b(p_\t)$ and $v_\t:=S_\t+ b(p_\t)$. Then,  for a constant $C>0$ independent of $\t$, one has
$$\smallint_0^T\norm{[u_\t-\rdr(v_\t)]_+}^2 + \smallint_0^T\norm{[\rim(v_\t)-u_\t]_+}^2< C\t.$$\label{Pros:L2Epslimit}
\end{proposition}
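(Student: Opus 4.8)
The plan is to collapse the two nonnegative quantities into a single space–time integral and then control that integral by combining an energy identity (which supplies a nonnegative diffusion term) with the oriented structure of the scanning curves. Set $a_\t:=[u_\t-\rdr(v_\t)]_+$ and $c_\t:=[\rim(v_\t)-u_\t]_+$. These have disjoint supports (since $\rim\le\rdr$ by \ref{prop:Rc}), so $a_\t^2+c_\t^2=(c_\t-a_\t)^2$; moreover the definition of $\Phi_\t$ in \eqref{Eq:DefPhi_eps} gives exactly $\t\,\p_t v_\t=\t\,\Phi_\t(u_\t,v_\t)=c_\t-a_\t$. Hence $a_\t^2+c_\t^2=\t(c_\t-a_\t)\,\p_t v_\t$, and after integration over $Q$,
\begin{equation*}
\int_0^T\big(\norm{a_\t}^2+\norm{c_\t}^2\big)=\t\int_0^T(c_\t-a_\t,\p_t v_\t)=\t\,(I_1-I_2),
\end{equation*}
where, writing $\mathcal P(u,v)$ for the projection of $u$ onto $[\rim(v),\rdr(v)]$ (so that $c_\t-a_\t=\mathcal P(u_\t,v_\t)-u_\t$), I put $I_1:=\int_0^T(\mathcal P(u_\t,v_\t),\p_t v_\t)$ and $I_2:=\int_0^T(u_\t,\p_t v_\t)$. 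It then suffices to bound $I_1$ from above and $I_2$ from below by constants independent of $\t$.

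For $I_2$ I would use $u_\t=b(p_\t)$ as a test function: it lies in $L^2(0,T;H^1_0(\Om))$ because $b\in C^1$, $b(0)=0$ and $p_\t=0$ on $\p\Om$. With $\p_t v_\t=\p_t S_\t+\p_t u_\t$ and $\del u_\t=b'(p_\t)\del p_\t$, testing \eqref{Eq:EPH1} with $\phi=u_\t$ gives
\begin{align*}
I_2&=\int_0^T\langle\p_t S_\t,u_\t\rangle+\tfrac12\big(\norm{u_\t(T)}^2-\norm{u_0}^2\big)\\
&=\int_0^T\big(\Df(u_\t,v_\t)\del u_\t,\del u_\t\big)-\int_0^T(k(S_\t)\bm g,\del u_\t)+\tfrac12\big(\norm{u_\t(T)}^2-\norm{u_0}^2\big).
\end{align*}
The first integral is nonnegative, so $-I_2$ discards it; the gravity term is absorbed by Young's inequality into $\tfrac{\Df_m}{2}\int_0^T\norm{\del u_\t}^2$, where $\Df_m>0$ uniformly in $\t$ (because $k(0)>0$ in case (a) and $S_\t\ge S_l>0$ in case (b), both from \Cref{Theo:ExistenceEPH}), and the boundary terms are controlled by the uniform bound $u_\t\in[U_m,U_M]$. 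This yields $-I_2\le C$ with $C$ independent of $\t$.

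The heart of the argument is $I_1$, and this is where the play-type (hysteretic) structure enters. Since $\Phi_\t=0$ wherever $u_\t\in[\rim(v_\t),\rdr(v_\t)]$, the integrand $\mathcal P(u_\t,v_\t)\,\p_t v_\t$ vanishes there, while $\mathcal P=\rim(v_\t)$ on $\{\p_t v_\t>0\}$ and $\mathcal P=\rdr(v_\t)$ on $\{\p_t v_\t<0\}$; thus pointwise $\mathcal P\,\p_t v_\t=\rim(v_\t)[\p_t v_\t]_++\rdr(v_\t)[\p_t v_\t]_-$. Using $\rim\le\rdr$ to replace $\rim$ by $\rdr$ on the ascending part (where $[\p_t v_\t]_+\ge0$),
\begin{equation*}
\int_0^T \mathcal P(u_\t,v_\t)\,\p_t v_\t\,dt\le \int_0^T\rdr(v_\t)\,\p_t v_\t\,dt=B(v_\t(T))-B(v_\t(0)),\qquad B(v):=\int_{V_m}^v\rdr(s)\,ds,
\end{equation*}
for a.e.\ $\bm x\in\Om$, the last equality being the chain rule for $v_\t(\bm x,\cdot)\in H^1(0,T)$ (valid since $\rdr$ is Lipschitz by \ref{prop:Rc}). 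As $v_\t\in[V_m,V_M]$ and $\rdr$ is bounded there, the right-hand side is at most $U_M(V_M-V_m)$, so integrating over $\Om$ gives $I_1\le C$ uniformly in $\t$. Intuitively, any excursion in which $v_\t$ rises and then falls is traversed upward along the lower curve $\rim$ and downward along the upper curve $\rdr$, so repeated oscillations only decrease $I_1$ and cannot accumulate a positive contribution.

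Combining the two bounds gives $I_1-I_2\le C$ independently of $\t$, whence $\int_0^T(\norm{a_\t}^2+\norm{c_\t}^2)=\t(I_1-I_2)\le C\t$, which is the claim. I expect the bound on $I_1$ to be the main obstacle: it must exploit the oriented nature of the scanning curves, since a naive Cauchy–Schwarz estimate of $\int(\mathcal P-u_\t,\p_t v_\t)$ would only produce the weaker rate $C\sqrt{\t}$. A secondary but essential technical point is to verify that every constant — in particular $\Df_m$ and the $L^\infty$ bounds on $u_\t,v_\t$ — is uniform in $\t$, which is exactly what the construction in \Cref{Theo:ExistenceEPH} provides.
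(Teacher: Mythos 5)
Your proof is correct and takes essentially the same route as the paper's: your bound $-I_2\le C$ is the paper's energy estimate obtained by testing with $\phi=u_\t$ in \eqref{Eq:EpsBoundUIneq}, your bound $I_1\le C$ (via $\mathcal{P}\,\p_t v_\t\le \rdr(v_\t)\,\p_t v_\t$ and the chain rule) is the paper's choice $\xi=\rdr(v_\t)$ in \eqref{Eq:EpsBoundVIneq}, and your identity $a_\t^2+c_\t^2=\t(c_\t-a_\t)\,\p_t v_\t$ is an exact repackaging of the paper's estimate of the cross term $(\Phi_\tau,u_\t-\rdr(v_\t))$ obtained from \eqref{Eq:PhiepsSimple}. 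The only cosmetic difference is where the monotonicity $\rim\le\rdr$ is invoked --- you apply it inside $I_1$, the paper inside the cross term --- and your insistence on the $\t$-uniformity of $\Df_m$ and of the $L^\infty$ bounds on $u_\t,v_\t$ is exactly the paper's appeal to \Cref{Theo:ExistenceEPH}.
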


\begin{proof}
Observe that $\Phi_\tau$ can be rewritten simply as
\begin{equation}
\Phi_\tau(u,v)=-\frac{1}{\t}[u-\rdr(v)]_+ -\frac{1}{\t}[u-\rim(v)]_-.\label{Eq:PhiepsSimple}
\end{equation}
We consider here the case when $|\bm{g}|=0$ and $k(0)=0$, the case $|\bm{g}|\not=0$ and $k(0)>0$ being simpler. From \Cref{Theo:ExistenceEPH}(b), $S_\t$ satisfies $0<S_l\leq S_\t\leq S_r<1$ a.e. in $Q$, where $S_l,\,S_r$ are independent of $\t$. For $\Df(u,v)$, defined as in \eqref{Eq:DefkPsi12}, this implies that there exists $\Df_m=k(S_l)\slash \max_{p\in \R}\{b'(p)\}>0$ such that \ref{ass:D} is satisfied. The pair $(u_\t,v_\t)$ satisfies
\begin{subequations}
\begin{align}
&\smallint_0^T\langle \p_t u_\t, \phi\rangle + \smallint_0^T(\Df(u_\t,v_\t)\del u_{\t}, \del \phi)=\smallint_0^T( \Phi_\tau(u_\t,v_\t),\phi),\label{eq:lastUV1}\\
&\smallint_0^T (\p_t v_\t,\xi)=\smallint_0^T (\Phi_\tau(u_\t,v_\t),\xi),\label{eq:lastUV2}
\end{align}
\end{subequations}
for all $\phi\in L^2(0,T;H^1_0(\Om))$ and $\xi\in L^2(Q)$. Using the test function $\phi=u_\t$ in \eqref{eq:lastUV1} yields
\begin{align}
\frac{1}{2}\norm{u_\t(T)}^2 + \Df_m\smallint_{0}^{T}\norm{\del u_\t}^2 \leq \frac{1}{2}\norm{u_0}^2 + \smallint_{0}^{T} (\Phi_\tau,u_\t).\label{Eq:EpsBoundUIneq}
\end{align}
Using $\xi=\rdr(v_\t)$ in  \eqref{eq:lastUV2} we get
\begin{align}
\smallint^T_0 \smallint_\Om \p_t\left(\smallint^{v_\t}_{V_m} \rdr(\vr)\,d\vr\right)=\smallint^T_0 (\Phi_\tau,\rdr(v_\t)).\label{Eq:EpsBoundVIneq}
\end{align}
Subtracting \eqref{Eq:EpsBoundVIneq} from \eqref{Eq:EpsBoundUIneq}, one further obtains
\begin{align*}
\frac{1}{2}\norm{u_\t (T)}^2 + \Df_m\int_{0}^{T}\norm{\del u_\t}^2 \leq \frac{1}{2}\norm{u_0}^2+  \left\|\smallint^{v_\t(T)}_{v_0} \rdr(\vr)\,d\vr \right\|_{1} + \smallint^T_0(\Phi_\tau,u_\t-\rdr(v_\t)).
\end{align*}
The term $\smallint^{v_\t}_{v_0} \rdr(\vr)\, d\vr$ is bounded in $L^\infty(\Om)$ since both $v_\t$ (from \Cref{Pros:Bound}, $V_m< v_\t < V_M$ a.e. in $Q$) and $\rdr(\cdot)$ are bounded. The last term is estimated as
\begin{align*}
&(\Phi_\tau,u_\t-\rdr(v_\t))=-\frac{1}{\t}\|[u_\t-\rdr(v_\t)]_+\|^2 -\frac{1}{\t}([u_\t-\rim(v_\t)]_-, u_\t-\rdr(v_\t))\\
&\leq -\frac{1}{\t}\|[u_\t-\rdr(v_\t)]_+\|^2 -\frac{1}{\t}([u_\t-\rim(v_\t)]_-, u_\t-\rim(v_\t))\\
&\leq -\frac{1}{\t}\|[u_\t-\rdr(v_\t)]_+\|^2-\frac{1}{\t}\|[u_\t-\rim(v_\t)]_-\|^2.
\end{align*}
Combining everything, we get
\begin{align}
&\frac{1}{2}\norm{u_\t (T)}^2 + \Df_m\smallint_{0}^{T}\norm{\del u_\t}^2 + \frac{1}{\t}\smallint_{0}^{T}\|[u_\t-\rdr(v_\t)]_+\|^2+ \frac{1}{\t}\smallint_{0}^{T}\|[u_\t-\rim(v_\t)]_-\|^2 \nonumber\\
&\leq \frac{1}{2}\norm{u_0}^2+  \left\|\smallint^{v_\t(T)}_{v_0} \rdr(\vr)\, d\vr \right\|_{1}. \label{Eq:EpsBoundFinalIneq}
\end{align}
This proves the assertion. The proof for the case $k(0)>0$ is similar.
\end{proof}

This implies that if a limit $(u,v)$ exists of $(u_\t,v_\t)$ as $\t\to 0$, then $u\in [\rim(v),\rdr(v)]$ in $Q$ implying that the pressure in this limit stays within the bounds of the hysteretic pressure curves.
Another immediate consequence of \Cref{Pros:L2Epslimit} is the boundedness of $\int_{0}^{T}\norm{\del u_\t}^2$ as evident from \eqref{Eq:EpsBoundFinalIneq}. This implies
\begin{corollary}
Under assumptions of \Cref{Pros:L2Epslimit}, there exists $u\in L^2(0,T;H^1_0(\Om))\cap L^\infty(0,T;L^2(\Om))$ such that $u_\t \rightharpoonup u$ in $L^2(0,T;H^1_0(\Om))$ weakly as $\t\to 0$.\label{Corollary:EpsIndBound}
\end{corollary}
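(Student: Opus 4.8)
The plan is to read off the required uniform bounds directly from the a priori estimate \eqref{Eq:EpsBoundFinalIneq}, which already isolates the gradient term with a $\t$-independent coercivity constant $\Df_m = k(S_l)/\max_{p\in\R} b'(p) > 0$. First I would check that the right-hand side of \eqref{Eq:EpsBoundFinalIneq} is bounded independently of $\t$: the term $\tfrac{1}{2}\norm{u_0}^2$ is fixed since $u_0 = b(p_0)$ does not depend on $\t$ and lies in $L^\infty(\Om)$, while $\norm{\smallint_{v_0}^{v_\t(T)} \rdr(\vr)\,d\vr}_1$ is controlled by $\sup_{[V_m,V_M]}|\rdr|\cdot\norm{v_\t(T)-v_0}_1$, which is finite because $\rdr$ is bounded on $[V_m,V_M]$ by \ref{prop:Rc} and $V_m \leq v_\t \leq V_M$ a.e. by \Cref{Pros:Bound}. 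Discarding the two nonnegative penalty terms $\tfrac{1}{\t}\norm{[\,\cdot\,]_\pm}^2$, inequality \eqref{Eq:EpsBoundFinalIneq} then yields $\Df_m \smallint_0^T \norm{\del u_\t}^2 \leq C$ with $C$ independent of $\t$.

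Since $u_\t \in \W \subset L^2(0,T;H^1_0(\Om))$ and on $H^1_0(\Om)$ the gradient seminorm is equivalent to the full norm (Poincar\'e inequality, constant $C_\Om$), this gives $\norm{u_\t}_{L^2(0,T;H^1_0(\Om))} \leq C$ uniformly in $\t$. For the $L^\infty(0,T;L^2(\Om))$ bound I would note that $u_\t = b(p_\t) \in [U_m, U_M]$ a.e.\ in $Q$ (established in the proof of \Cref{Theo:ExistenceEPH}), so that $\norm{u_\t(t)} \leq \max\{|U_m|,|U_M|\}\,|\Om|^{1/2}$ for a.e.\ $t$; alternatively, the same energy computation carried out with the test function $\phi = u_\t\,\chi_t$ supported on $[0,t]$ reproduces $\tfrac{1}{2}\norm{u_\t(t)}^2$ on the left and bounds it by the same $\t$-independent right-hand side for every $t \in (0,T]$. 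Either way $\norm{u_\t}_{L^\infty(0,T;L^2(\Om))} \leq C$ uniformly.

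With these two uniform bounds in hand, the conclusion is a standard weak-compactness argument. The space $L^2(0,T;H^1_0(\Om))$ is a Hilbert space, hence reflexive, so from the bounded family $\{u_\t\}$ I would extract a sequence $\t_k \to 0$ and a limit $u \in L^2(0,T;H^1_0(\Om))$ with $u_{\t_k} \rightharpoonup u$ weakly. The membership $u \in L^\infty(0,T;L^2(\Om))$ follows by weak-$*$ compactness in $L^\infty(0,T;L^2(\Om)) = \big(L^1(0,T;L^2(\Om))\big)^{*}$ together with weak lower semicontinuity of the norm, passing to a further subsequence if needed; the two limits coincide since both reduce to the $L^2(Q)$ weak limit when tested against smooth functions.

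I do not anticipate a genuine obstacle here, as the corollary is essentially a repackaging of \eqref{Eq:EpsBoundFinalIneq}. The only points requiring mild care are the uniform-in-$\t$ boundedness of the right-hand side of \eqref{Eq:EpsBoundFinalIneq} --- which rests on the $\t$-independent saturation bounds $0<S_l \leq S_\t \leq S_r<1$ and $V_m \leq v_\t \leq V_M$ supplied by \Cref{Theo:ExistenceEPH} and \Cref{Pros:Bound} --- and the understanding that the asserted convergence holds along a subsequence, since the weak limit points of $\{u_\t\}$ need not a priori be unique.
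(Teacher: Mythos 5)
Your proposal is correct and takes essentially the same route as the paper, which presents the corollary as an immediate consequence of the $\t$-independent bound $\Df_m \smallint_0^T \norm{\del u_\t}^2 \leq C$ read off from \eqref{Eq:EpsBoundFinalIneq} (with the right-hand side controlled by the bounds on $u_0$, $v_\t$ and $\rdr$), followed by weak compactness in the Hilbert space $L^2(0,T;H^1_0(\Om))$. Your additional remarks --- the Poincar\'e step, the two ways of obtaining the $L^\infty(0,T;L^2(\Om))$ bound, and the observation that the convergence holds along a subsequence since limit points need not be unique --- simply make explicit what the paper leaves implicit.
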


\section{Conclusion}
In this paper, we analysed the extended play-type hysteresis (EPH) model, proposed in \cite{Kmitra2017}, for the unsaturated flow case. A modification of the  model is proposed that conforms with the standard weak formulation used in analysis. A regularised version of the problem is considered and it is shown to be equivalent to a nonlinear parabolic equation coupled with an ordinary differential equation. Using Rothe's method, the existence of weak solutions is first proven for the equivalent system and then for the \hyperlink{link:EPH}{(EPH)} model. Solutions are shown to exist even when the capillary pressure functions blow up and relative permeability vanishes for zero saturation. To cover the latter case, the maximum principle needs to hold, which is proven to be the case in the absence of an advection term. The consequences of passing the regularisation parameter to zero are explored,  which show that the pressure stays bounded by capillary pressure curves in the limit.

\section*{Acknowledgment}
The author acknowledges the support from INRIA Paris through the ERC Gatipor grant. The work was also partially carried out in Eindhoven University of Technology under the grant 14CSER016  and in Hasselt University under the grant BOF17BL04. The author would like to thank Prof. Sorin Pop for many fruitful discussions on the topic and his constant encouragement. Also, thanks to the colleagues in Eindhoven for providing valuable comments.

\end{document}